\newcommand{\Gr}{Gr\"obner}
\newcommand{\Pl}{Pl\"ucker}
\newcommand{\rk}{\mathnormal{rk}\,}
\newcommand{\sat}{{\textnormal{sat}}}
\newcommand{\reg}{\textnormal{reg}}
\newcommand{\PGL}{\textsc{pgl}}
\newcommand{\HilbFunctor}[2]{\underline{\mathbf{Hilb}}^{#2}_{#1}}
\newcommand{\HilbFunctorR}[3]{\underline{\mathbf{Hilb}}^{#2,[#3]}_{#1}}
\newcommand{\Kalg}{\underline{k\textnormal{-Alg}}}
\newcommand{\Sets}{\underline{\textnormal{Set}}}
\newcommand{\schm}{\underline{\textnormal{Sch}/k}}
\newcommand{\HilbScheme}[2]{\mathbf{Hilb}^{#2}_{#1}}
\newcommand{\HilbSchemeR}[3]{\mathbf{Hilb}^{#2,[#3]}_{#1}}
\newcommand{\PP}{\mathbb{P}}
\newcommand{\supp}{\mathrm{Supp}\,}
\newcommand{\Ht}{\mathrm{Ht}\,} 
\newcommand{\cN}{\mathcal N} 
\newcommand{\Proj}{\textnormal{Proj}\,}
\newcommand{\Spec}{\textnormal{Spec}\,}
\newcommand{\NN}{\mathbb{N}}
\newcommand{\QQ}{\mathbb{Q}}
\newcommand{\GrassFunctor}[2]{\underline{\mathbf{Gr}}_{#1}^{#2}}
\newcommand{\GrassScheme}[2]{\mathbf{Gr}_{#1}^{#2}}
\renewcommand{\a}{\mathrm{a}}
\renewcommand{\b}{\mathrm{b}}
\newcommand{\RevLex}{\mathtt{DegRevLex}}
\newcommand{\MFFunctor}[1]{\underline{\mathbf{Mf}}_{#1}}
 \numberwithin{equation}{section}
\newtheorem{lemma}{Lemma}[section] 
\newtheorem{theorem}[lemma]{Theorem} 
\newtheorem{corollary}[lemma]{Corollary} 
\newtheorem{prop}[lemma]{Proposition} 
\newenvironment{proof1}{\noindent{\it Proof of Theorem \ref{principale1intro}}.}{\qed}
\newenvironment{proof2}{\noindent{\it Proof of Theorem \ref{i1}}.}{\qed}
\theoremstyle{definition}
\newtheorem{notation}[lemma]{Notation} 
\newtheorem{remark}[lemma]{Remark} 
\newtheorem{definition}[lemma]{Definition} 
\newtheorem{example}[lemma]{Example}
\DeclareMathAlphabet{\mathpzc}{OT1}{pzc}{m}{it}
\begin{document}

\title[Points of the Hilbert scheme with bounded regularity]{The locus of points of the Hilbert scheme with bounded regularity}

\author[E.~Ballico]{Edoardo Ballico}
\address{Dipartimento di Matematica dell'Universit\`{a} di Trento\\ 
         38123 Povo (TN), Italy}
\email{\href{mailto:ballico@science.unitn.it}{ballico@science.unitn.it}}

\author[C.~Bertone]{Cristina Bertone}
\address{Dipartimento di Matematica dell'Universit\`{a} di Torino\\ 
         Via Carlo Alberto 10, 
         10123 Torino, Italy}
\email{\href{mailto:cristina.bertone@unito.it}{cristina.bertone@unito.it}}

\author[M.~Roggero]{Margherita Roggero}
\address{Dipartimento di Matematica dell'Universit\`{a} di Torino\\ 
         Via Carlo Alberto 10, 
         10123 Torino, Italy}
\email{\href{mailto:margherita.roggero@unito.it}{margherita.roggero@unito.it}}

\thanks{The third author was supported by the framework of PRIN 2010-11 \emph{Geometria delle variet\`a algebriche}, cofinanced by MIUR}


\subjclass[2010]{14C05, 14Q20} 

\begin{abstract} 
In this paper we consider the Hilbert scheme $\HilbScheme{p(t)}{n}$ parame\-te\-rizing subschemes of $\PP^n$ with Hilbert polynomial $p(t)$, and we 
investigate its
locus containing points corresponding to schemes with regularity lower than or equal to a fixed integer $r'$. This locus is an open 
subscheme of $\HilbScheme{p(t)}{n}$ 
and, for every $s\geq r'$, we describe it as a locally closed  subscheme of the Grasmannian $\GrassScheme{p(s)}{N(s)}$ given by a set of equations of degree $\leq \deg(p(t))+2$ and 
linear inequalities in the coordinates of the  Pl\"ucker embedding.
\end{abstract}

\keywords{Hilbert scheme \and Castelnuovo-Mumford regularity \and Borel-fixed ideal}

\maketitle

\section{Introduction}

One of the most interesting and investigated projective schemes in Algebraic Geometry is the \emph{Hilbert scheme}.
We consider a projective space $\PP^n$
over  a field $k$ of characteristic 0, {we fix an \emph{admissible} Hilbert polynomial $p(t)$, 
 i.e.  the Hilbert
polynomial of some subscheme of $\PP^n$}; the Hilbert scheme $\HilbScheme{p(t)}{n}$ parameterizes all the subschemes $X\subseteq \PP^n$
having $p(t)$ as Hilbert polynomial
of its coordinate ring. 

Even though the Hilbert scheme was formally introduced by \citet{Gro} in the 60's, it is not completely  understood yet.

The set of all schemes $X\in \HilbScheme{p(t)}{n}$ has bounded regularity  (in the sense of
 Castelnuovo-Mumford regularity \citep[Definition at page 99]{m}).  Mumford   proved     it to give another proof
of the representability of the Hilbert scheme \citep[Theorem at page 101]{m}. Then Gotzmann found the optimal upper bound
for the regularity (\citet[78, Satz (2.9)]{gotz}; see also \citet[Lemma
 C.23]{ik}). By Gotzmann's Regularity theorem  there
 exists a number $r$ only depending on $p(t)$ and easily obtained from the coefficients of $p(t)$, called \emph{Gotzmann number}, for
 which the ideal sheaf $\mathcal I_X$ of
 each scheme $X$ in $\HilbScheme{p(t)}{n}$  is $r$-regular. We refer to  \citet{ik} or \citet[Theorem 3.1]{gotz} for the explicit value of 
 the Gotzmann number $r$ for any Hilbert polynomial $p(t)$.
 
Let  $N(t)$ be the dimension of the $k$-vector space $H^0\mathcal O_{\PP^n} (t) $.  If we choose $t=r$, 
 the canonical map  $H^0\mathcal O_{\PP^n} (r) \rightarrow H^0\mathcal O_X (r)$  is surjective and the dimension of 
 $ H^0\mathcal O_X (r)$ as a $k$-vector space is $p(r)$. This is the starting point of the  classical construction that allows to embed the Hilbert scheme 
 as a closed subscheme of the Grassmannian   $\GrassScheme{p(r)}{N(r)}$.

In many cases the regularity  
of a schemes $X\in \HilbScheme{p(t)}{n}$ we are interested in is  far lower than the Gotzmann number $r$.

\begin{example}\label{esintro}
In each one of the following cases,  we get a bound $r'$ on the regularity of the subscheme $X$ just applying Castelnuovo-Mumford's lemma 
and  the references
quoted in each item:
\begin{itemize}
\item[(i)] Consider $p(t) =c$. The Gotzmann number $r$ of $p(t)$ is $c$. Let $X\subset \mathbb {P}^n$ be $c$ general points. 
If $c\gg n$ then we may take
$r' \sim (n!\cdot c)^{1/n}$ \citep{hirscho}.
\item[(ii)] Consider $n=g-1\ge 2$ and $p(t) = (2g-2)t+1-g = 2nt-n$. Let $X\subset \mathbb {P}^n$ be a smooth canonically embedded
curve of genus $n+1$. 
We may take $r'=3$ \citep[Corollary 9.4]{e}.
\item[(iii)] Fix an integer $g$ such that $0\le g \le n-1$ and consider a linearly normal smooth curve $X\subset \mathbb {P}^n$ 
of degree $n+g$; hence 
$p(t) = (n+g)t+1-g$. $X$ is arithmetically Cohen-Macaulay and we may take $r'=3$ (\citet[Theorem 8.1 and Corollary 8.2]{e}).
\item[(iv)] Let $X\subset \mathbb {P}^n$ be an integral and non-degenerate curve of degree $c$. Hence $p(t)=ct+1-g$, where $g:= p_a(X)$. 
We may take 
$r' = c+2-n$ (\citet{glp}, \citet[Theorem 5.1]{e})
 \end{itemize}

For many other higher dimensional examples, see several papers on the Eisenbud-Goto conjecture (see \citet{e}, 
Conjecture 5.2 and the references therein).
\end{example}

By the semicontinuity theorem,  the locus of points of the Hilbert scheme having regularity upper bounded by a fixed integer $r'$ is an open subscheme of 
$\HilbScheme{p(t)}{n}$, hence a locally closed subscheme of $\GrassScheme{p(r)}{N(r)}$. Moreover,  this subscheme is  connected, just like the whole Hilbert scheme
(see \citet{h2}, \citet[Section 5.2]{mall}).

 This locus of $\HilbScheme{p(t)}{n}$  is the main object of study of the present paper.
 To this aim, we define the functor $\HilbFunctorR{p(t)}{n}{r'}$ which associates to each    scheme $Z$ over $k$,  the set of schemes $X$ in $\PP^n\times_k Z$ 
 such that the projection over $Z$ is flat and whose fibers have  Hilbert polynomial $p(t)$ and 
regularity $\leq r'$ (see Definition \ref{def:hilbsubfunctorRing}). For $r'=r$, the functor $\HilbFunctorR{p(t)}{n}{r}$ coincides with the classical  Hilbert 
functor $\HilbFunctor{p(t)}{n}$.

We embed $\HilbFunctorR{p(t)}{n}{r'}$ in a suitable Grassmann functor  $\GrassFunctor{p(s)}{N(s)}$, for $s\geq r'$, similarly to the  embedding of the Hilbert functor  in the Grassmann functor $\GrassFunctor{p(r)}{N(r)}$ \citep{Gro,ik,HaimSturm}. 
We prove that $\HilbFunctorR{p(t)}{n}{r'}$ is representable, using suitable open covers of these functors, obtained exploiting the properties of Borel fixed ideals 
and the action of the linear group $\PGL(n+1)$. In this setting, a special role is played by  a closed subset of the Grassmannian $\GrassScheme{p(s)}{N(s)}$ that 
we denote by ${L}^{[r',s]}_{p(t)}$,  which is the intersection
of $\GrassScheme{p(s)}{N(s)}$ and a linear space under the \Pl\ embedding  (see Notation \ref{not}).

As pointed out by the referee,  our arguments allow to give another proof of the existence of the Hilbert scheme over a field with characteristic zero, not relying 
on the flattening stratification. This is a very nice remark (due to the referee). The crucial technical tools  are marked bases and schemes \citep{CR,BCLR,BLR,LRFunt}.
The first main result of the paper is the following.
\begin{theorem} \label{principale1intro}The Hilbert functor with bounded regularity  $\HilbFunctorR{p(t)}{n}{r'}$ is representable. For every  $s\geq r'$, 
the representing scheme $\HilbSchemeR{p(t)}{n}{r'}$ can be embedded in  $\GrassScheme{p(s)}{N(s)}\setminus {L}^{[r',s]}_{p(t)}$
 as a  closed subscheme.
Moreover, 
\begin{enumerate}[(i)]
\item \label{principale_i}for $s=r'=r$, $\HilbFunctor{p(t)}{n}=\HilbFunctorR{p(t)}{n}{r}$ is representable and $\HilbScheme{p(t)}{n}$ 
can be embedded as closed subscheme of  $\GrassScheme{p(r)}{N(r)}$;
\item for $r'<r$,  $\HilbSchemeR{p(t)}{n}{r'}$ is the open subscheme  $\HilbScheme{p(t)}{n}\cap (\GrassScheme{p(r)}{N(r)}\setminus {L}^{[r',r]}_{p(t)})$.
\end{enumerate}
\end{theorem}

By Gotzmann's Persistence Theorem (\citet{gotz}), explicit equations for the scheme structure of $\HilbScheme{p(t)}{n}$ in  $\GrassScheme{p(r)}{N(r)}$ 
can be found computationally, 
imposing  vanishing conditions on some minors of a particular matrix.  By the \Pl \ embedding, $\HilbScheme{p(t)}{n}$ becomes 
a subscheme of $\PP^{E(r)-1}$, for $E(t):=\binom{N(t)}{p(t)}$.   The number $E(r)$ is generally huge, and  even larger is the 
number of minors to consider. Nonetheless,  many authors dealt with 
the challenge of finding 
 sets of
defining equations 
for $\HilbScheme{p(t)}{n}$ in the Grassmannian.
For instance \citet{ik},  \citet{B} in his Ph.D. Thesis, \citet{HaimSturm} and more recently \citet{ABM}  and \citet{BLMR}  
find  sets  of equations of different degrees. 
The least degree bound among these is that  in \citet{BLMR}, where the authors find a set of  equations
of degree $\le d+2$, with 
$d:=\deg (p(t))$.

\begin{example}
 If $d=0$, i.e. $p(t) = c$, with $c$ a positive integer, then $r=c$ and in this case we get $E=\binom{N(c)}{c}$.

 For $d>0$, $r$ is a more complicated function depending on the coefficients of $p(t)$. For instance, if $p(t) = at+b$, the admissibility 
 of $p(t)$ means $a>0$ 
 and $b \ge -a(a-3)/2$; in this case $r= a(a-1)/2+b$ (\citet[Example 1.2]{CLMR}). Hence, $E=\binom{N(r)}{p(r)}$ can be a very large number
 even for quite small
 values of $a$ and $b$.    In this case $d=1$, hence by \citet{BLMR}, $\HilbScheme{p(t)}{n}$  is a subscheme of the Grassmannian 
 $\GrassScheme{p(r)}{N(r)}$
 defined by a set of equations of degree $\leq 3$. 
\end{example}

As second achievement of this paper, we exhibit a set of defining equations for $\HilbSchemeR{p(t)}{n}{r'}$  in $\GrassScheme{p(s)}{N(s)}\setminus L^{[r',s]}_{p(t)}$. 

\begin{theorem}\label{i1}
The Hilbert scheme with bounded regularity $ \HilbSchemeR{p(t)}{n}{r'} $ can be embedded as a closed subscheme 
of  $\GrassScheme{p(s)}{N(s)}\setminus L^{[r',s]}_{p(t)}$
defined by polynomial equations  of degree $\leq \deg(p(t))+2$ in the coordinates of the  Pl\"ucker embedding 
of $\GrassScheme{p(s)}{N(s)}$.
\end{theorem}

We obtain this exploiting again the properties of marked bases and schemes, the action of $\PGL(n+1)$ and also 
the theory of extensors of exterior algebras over a ring, developed in \citep{BLMR}.

In the case $s=r$, 
Theorem \ref{i1} gives a nice addition to \citet{BLMR}. Furthermore, if $s < r$, $E(s)\ll E(r)$  and in particular
for $s=r'<r$, our equations  involve a much smaller number of variables than is needed for the whole Hilbert scheme.


\section{Hilbert functors with bounded regularity}\label{sec:generalitiesHilb}

 In the following we  fix a field $k$. All rings will be  Noetherian $k$-algebras 
and all the schemes will be locally  Noetherian schemes over $k$. 
Moreover, we fix an integer $n$ and an admissible Hilbert polynomial $p(t)$  for projective subschemes in the $n$-dimensional projective space $\PP^n$
over the field $k$,   and let $N(t):=\binom{n+t}{n}$, $q(t):=N(t)-p(t)$ and   $r$ be the Gotzmann number of the polynomial $p(t)$.

We will denote  by $P$ the polynomial ring $ k[x_0,\ldots,x_n]$, so that $\PP^n =\Proj P$. For any  
$k$-algebra $A$,  $P \otimes_k A$ is the polynomial ring $A[x_0,\ldots,x_n] $ and  $\PP^n_A$ is the projective space 
$\Proj P \times_k\Spec A$. For any Noetherian $k$-scheme $S$ set $\PP ^n_S:= \Proj P\times _k S$.
If  $A=K$  is a field,  a closed subscheme    $X\subset \PP^n_K$    is said to be $t$-regular if its ideal sheaf $\mathcal {I}_X$ is $t$-regular, i.e 
$H^{i}(\mathcal {I}_X(t-i))=0$ for all $i>0$. 
Castelnuovo-Mumford's lemma gives that $t$-regularity implies $t'$-regularity for all $t'>t$, i.e. $H^{i}(\mathcal {I}_X(t-j))=0$ for
all $i>0$ and all $j\le i$ (\citet[p. 504, 516]{e2}).

Fix a Noetherian $k$-scheme $S$. A subscheme $X \subset \PP^n _S$ flat over $S$ is $t$-regular if its ideal sheaf 
$\mathcal{I}_X$ is $t$-regular, i.e. if  for each field $K$ and each map $\Spec K \to S$ we have $H^{i}(\mathcal{I}_{X_{K}} (t-i)) = 0,\ \forall\ i > 0$. 
Since $t$-regularity
implies $(t+1)$-regularity,  we may apply the base change theorem for cohomology (\citet[III.12.11]{AG}, or \citet[part (3) of Theorem 5.10]{nit}) 
and see that if $X$ is $t$-regular, then
$H^{i}(X,\mathcal {I}_X(t-j)) =0$ for all $i >0$ and all $j\le i$. Since we defined $t$-regularity using fields, 
it is obvious that if $X\subset \PP^n_S$ is $t$-regular, then
for each pull-back map $f: S' \to S$, the scheme $f^\ast (X)$ is $t$-regular.

The semicontinuity theorem for cohomology
gives that the set of all $s\in S$ such that $X_s$ is $t$-regular is open in $S$ (\citet[part (1) of  Theorem 5.10 ]{nit}, or \citet[III.12.8]{AG}). 
 The lowest integer $x$ such that  for each $s\in S$ the scheme $X_s$ is $x$-regular is called the \emph{Castelnuovo-Mumford regularity} of $X$.

In the following, $\HilbFunctor{p(t)}{n}$ denotes the Hilbert functor 
\begin{equation*}
\HilbFunctor{p(t)}{n}: \schm^{\circ}\rightarrow \Sets.
\end{equation*}
It associates to any object $Z$ of the category of schemes over $k$ the set
\[
 \HilbFunctor{p(t)}{n}(Z) = \left\{X \subset \PP^n \times_k Z\ \left\vert \ \begin{array}{l}
 \ X \rightarrow Z \text{ flat and whose fibers } \\ \text{ have  Hilbert
 polynomial } p(t)  \end{array}\right.\right\}
\]
and to any morphism of schemes $f:Z \rightarrow Z'$ the map 
\[
\begin{split}
\HilbFunctor{p(t)}{n}(f):&\ \HilbFunctor{p(t)}{n}(Z') \rightarrow \HilbFunctor{p(t)}{n}(Z)\\
&\parbox{2.18cm}{\centering $X'$} \mapsto\ X' \times_{Z'} Z
\end{split}
\]

\begin{remark}
The Hilbert  functor is representable, i.e. it is isomorphic to the functor 
of points of a scheme over 
$k$: the scheme representing $\HilbFunctor{p(t)}{n}$ is called Hilbert scheme
and denoted  by $\HilbScheme{p(t)}{n}$. The existence of this scheme was 
first proved by \citet{Gro}, using flattening stratifications. As suggested by 
the referee, in the present paper we do not use the representability of the Hilbert
scheme as a tool, but we give a new independent and self-contained proof that uses 
the properties of marked bases (Section \ref{sec:generalSetting}) instead of  flattening 
stratification or Fitting ideals \citep{HaimSturm}.
\end{remark}

This paper mainly concerns the following functor.
\begin{definition}\label{def:hilbsubfunctorRing}
For any fixed integer $r'$,  we denote by $\HilbFunctorR{p(t)}{n}{r'}$ the functor
\begin{equation*}
\HilbFunctorR{p(t)}{n}{r'}: \schm^{\circ}\rightarrow \Sets.
\end{equation*}
It associates to any object $Z$ of the category of schemes over $k$ the set
\begin{equation*}
 \HilbFunctorR{p(t)}{n}{r'}(Z) = \left\{X \subset \PP^n \times_k Z\ \left\vert \ \begin{array}{l}
 \ X \rightarrow Z \text{ flat and whose fibers } \\ \text{ have  Hilbert
 polynomial } p(t)  \\
\text{ and regularity}\leq r'\end{array}\right.\right\}.
\end{equation*}
and to any morphism of schemes $f:Z \rightarrow Z'$ the map 
\[
\begin{split}
\HilbFunctorR{p(t)}{n}{r'}(f):&\ \HilbFunctorR{p(t)}{n}{r'}(Z') \rightarrow \HilbFunctorR{p(t)}{n}{r'}(Z)\\
&\parbox{2.18cm}{\centering $X'$} \mapsto\ X' \times_{Z'} Z
\end{split}
\]
We will say that  $\HilbFunctorR{p(t)}{n}{r'}$  is the   \emph{Hilbert functor with bounded regularity $r'$ and Hilbert polynomial $p(t)$ of $\PP^n$}.
\end{definition}

\begin{remark}\label{zariski}
 By Gotzmann's Regularity theorem,  the Gotzmann  number $r$
 only depends on $p(t)$ and the ideal sheaf $\mathcal I_X$ of
 each scheme $X$ in $\HilbFunctor{p(t)}{n}(Z)$  is $r$-regular, for every $k$-scheme $Z$. 
Hence, by Definition \ref{def:hilbsubfunctorRing}, $\HilbFunctor{p(t)}{n} =\HilbFunctorR{p(t)}{n}{r} $. 

Furthermore, by the semicontinuity theorem for regularity, for every $r''<r'$,
$\HilbFunctorR{p(t)}{n}{r''}$ can be considered as an open subfunctor of $\HilbFunctorR{p(t)}{n}{r'}$. 
Obviously, $\HilbFunctor{p(t)}{n}$ is a Zariski sheaf (it is even a sheaf for finer topologies \citep[Section 5.1.3]{nit}). 

Summing up these two facts, $\HilbFunctorR{p(t)}{n}{r'}$ is a Zariski sheaf for every $r'$, and this allows us to apply \citep[Lemma E.11]{sernesi} 
and consider the Hilbert functor with bounded regularity defined between the category of affine noetherian $k$-schemes and that of sets. We will 
write $\HilbFunctorR{p(t)}{n}{r'}(A)$ for $\HilbFunctorR{p(t)}{n}{r'}(\Spec(A))$, for every $k$-algebra $A$.

Once proved that, for every $r'$, $\HilbFunctorR{p(t)}{n}{r'}$ is representable,   we will get as a consequence that  $\HilbFunctor{p(t)}{n}$  
is representable and that for every $r'<r$, $\HilbSchemeR{p(t)}{n}{r'}$ is an open subscheme of $\HilbScheme{p(t)}{n}$.
\end{remark}

\section{Embeddings in  Grassmann functors}

Following the classical setting \citep{ACG,CS,gotz, Gro,ik}, we will construct the scheme representing the Hilbert functor with bounded regularity in a 
suitable Grassmannian. Hence, we briefly recall the Grassmann functors (see  \citep[pp.110--114]{nit}).
 
Let us fix integers $p,q,N$ such that $0 < p < N$ and $q=N-p$.  The Grassmann functor is the functor
$\GrassFunctor{p}{N}: \Kalg \rightarrow \Sets$ that associates to any $k$-algebra $A$ the set
\begin{multline} \label{liberi}
\GrassFunctor{p}{N}(A) = \left\{\begin{array}{l} \text{isomorphism classes of locally free}\\ \text{quotient } A^N 
\rightarrow Q \text{ of rank } p \end{array}\right\} = 
 \\ = \left\{W \subset A^N\ \vert\ \rk(W)=q  \text{ and }A^N/W\text{ locally free } \right\} 
\end{multline}
and to any morphism $f : A \rightarrow B$
\[
\begin{split}
\GrassFunctor{p}{N}(f):\ \parbox{2cm}{\centering $\GrassFunctor{p}{N}(A)$} &\rightarrow\parbox{3cm}{\centering $\GrassFunctor{p}{N}(B)$} \\
 (A^N \rightarrow Q)\ &\mapsto\  (B^N \rightarrow Q\otimes_A B).
\end{split}
\]

The scheme representing this functor is the Grassmannian $\GrassScheme{p}{N}$. By   the \Pl\ embedding, $\GrassScheme{p}{N}$ becomes a subscheme of 
the projective space $\PP^E$, where $E=\binom{N}{p}$.

 If $X \in \HilbFunctorR{p(t)}{n}{r'} (A)$, from the cohomology long exact sequence of
\begin{equation*}\label{eq:shortExactSequence}
0 \longrightarrow \mathcal{I}_{X} \longrightarrow \mathcal{O}_{\PP^n_A} \longrightarrow \mathcal{O}_X \longrightarrow 0
\end{equation*}
 twisted by any $s\geq r'$, we can deduce the  short
 exact sequence of global sections
\begin{equation}\label{eq:esatta}
0 \longrightarrow H^0\big(\mathcal{I}_{X}(s)\big) \longrightarrow H^0 \big(\mathcal{O}_{\PP^n_A}(s)\big) \longrightarrow H^0\big(\mathcal{O}_X(s
)\big) \longrightarrow 0.
\end{equation}
 
  The  $A$-module $H^0 \big(\mathcal{O}_{\PP^n_A}(s)\big)$ is free of rank $N(s)$  and,   by flatness,  $H^0\big(\mathcal{O}_{X}(s)\big)$ is locally
  free of rank $p(s)$. Then, the $A$-module $H^0\big(\mathcal{I}_X(s
)\big)$ is locally free with rank  $q(s)= N(s) -p(s)$.  
 Using (\ref{eq:esatta}) we get  a natural transformation 
\begin{equation}\label{eq:changeCoordFucntors}
\underline{\mathscr{H}}^{[s]} \colon \HilbFunctorR{p(t)}{n}{r'} \longrightarrow \GrassFunctor{p(s)}{N(s)} 
\end{equation} 
in the following way. For for each $k$-algebra $A$ and each $X\in \HilbFunctorR{p(t)}{n}{r'}(A)$
we take
\[ \underline{\mathscr{H}}^{[s]}(A)(X)= P_s\otimes_k A /H^0(\mathcal I_X(s))=H^0\big(\mathcal{O}_{X}(s
)\big) \]
and for  each homomorphism $f: A \rightarrow B$ of $k$-algebras, set
\[
\begin{split}
\phi=\HilbFunctorR{p(t)}{n}{r'} (f):&\ \HilbFunctorR{p(t)}{n}{r'} (A)\ \rightarrow \parbox{3cm}{\centering $\HilbFunctorR{p(t)}{n}{r'} (B)$}\\
& \parbox{2.2cm}{\centering $X$} \mapsto\  X':=X \times_{\Spec A} \Spec B
\end{split}
\]
we take
\[ \underline{\mathscr{H}}^{[s]}(\phi) \colon \ H^0\big(\mathcal{O}_X(s
)\big) \mapsto  H^0\big(\mathcal{O}_{X '}(s
)\big) .\]

\section{Marked bases and schemes}\label{sec:generalSetting}

  For denoting terms, we will use the multi-index notation, i.e.,  $x^\alpha := x_0^{\alpha_0} \cdots
 x_n^{\alpha_n}$ 
for every $\alpha = (\alpha_0,\ldots,\alpha_n) \in \NN^{n+1}$. When the order of terms comes into play, we will consider the 
degree reverse lexicographic order $\RevLex$ assuming $x_0 < \ldots < x_n$. For any term $x^\alpha$,  let $\min(x^\alpha)$ denote the minimal variable
which divides $x^\alpha$. If $J$ is a monomial ideal, we will denote by $\cN(J)$ the set of terms in $P\setminus J$. For a subset $V$ of a usual
graded ring $R=\bigoplus_{t}R_t$,   $V_s$ and $V_{\geq s}$  will denote respectively $V\cap R_s$ and $V\cap \bigoplus_{t\geq s}R_t$. 

An ideal $I \subset P$ is said \emph{Borel-fixed} (Borel for short) if it is fixed by the action of the Borel subgroup, i.e. by the
 group of the
 upper  triangular matrices. These ideals are well studied, mainly because of two reasons:
\begin{itemize}
\item \citet{Galligo}  and \citet{BS} proved that the generic initial ideal of any ideal is 
Borel-fixed, 
which means, in the context of Hilbert schemes, that any component and any intersection of components of $\HilbScheme{p(t)}{n}$
 contains at 
least a point corresponding to a scheme defined by a Borel-fixed ideal;
\item in characteristic zero, an ideal $J$ is Borel-fixed if, and only if, it is a strongly stable  monomial ideal, i.e.,  it is generated by terms,  and 
for each term $x^\alpha \in J$, then also the term $\frac{x_j}{x_i} x^\alpha$
is 
in  $J$ for all $x_i \mid x^\alpha$ and $x_j > x_i$ (we recall that we are assuming $x_0 < \cdots < x_n$).
\end{itemize}

For these reasons, from now on we assume that $\mathrm{char}(k)=0$.

Our proofs heavily use the theory of marked bases (\citet{CR,BCLR,LRFunt}). We recall some of the results and notation contained
in the quoted papers.

Let $A$ be a $k$-algebra. Set $T:=P\otimes_k A$. Obviously, the set of terms of any degree $s$ of a Borel-fixed ideal generates a Borel-fixed ideal. For each $f\in T$,  
the support 
$\supp(f)$ of $f$ is the set of terms  that appear in $f$ with  non-zero coefficient.

\begin{definition}{\citep[Definitions 1.3 and 1.4]{CR}}\label{basemarcata}
A \emph{monic marked polynomial} (marked polynomial for short) is a polynomial $f\in T$ together with a specified term
$x^\alpha$ of $\supp(f)$,  called \emph{head term}
of $f$ and denoted by $\Ht(f)$.  We assume furthermore that the coefficient of $x^\alpha$ in $f$ is $1_A$. Hence, we can write a marked polynomial 
as $f_\alpha = 
x^\alpha-\sum c_{\alpha\gamma}x^\gamma$, with $x^\alpha =\Ht (f_\alpha)$, $x^\gamma \neq x^\alpha$ and $c_{\alpha\gamma}\in A$. 

A finite set $F$ of homogeneous marked polynomials $f_\alpha = x^\alpha-\sum c_{\alpha\gamma}x^\gamma$, with $Ht(f_\alpha) =x^\alpha$, 
is called a \emph{$J$-marked set} if the head terms $x^\alpha$ form the  minimal monomial basis $B_J$ of the monomial ideal $J$ as a $T$-module
 and every $x^\gamma$ is an element of  $\cN(J)$. Hence,  $\vert\supp(f_\alpha) \cap J\vert = 1$.

 A $J$-marked set $F$ is a \emph{$J$-marked basis} if  the quotient $T/(F)$ is freely generated by $\mathcal N(J)$ as an $A$-module.
\end{definition}
\begin{lemma}\label{piatto}\citep[Section 3]{LRFunt}
Let $J$ be a monomial ideal and $I$ be a homogeneous ideal in $T$ generated by a $J$-marked basis. Then $\Proj(T/I)$ is $A$-flat. 
\end{lemma}
\begin{proof}
It is sufficient to prove that $T/I$ is an $A$-flat module \citep[Section II, Proposition 5.2 and Section III, proof of Proposition 9.2]{AG}. By definition 
of marked basis, $T/I$ is freely generated by $\mathcal N(J)$ as an $A$-module, hence it is $A$-flat.
\end{proof}

Let $J$ be a strongly stable ideal  and $I$ be the ideal in $T$ generated by    a $J$-marked set $F$.  
 For every  $s$,     let us denote by
$F^{(s)}$   the following set of polynomials marked on  $(J_{s})$: 
\[F^{(s)} :=\{ x^\eta f_\alpha \ \vert \  f_\alpha \in F,\ deg(x^{\eta}f_{\alpha})=s,\ \Ht(x^\eta f_\alpha)=x^\eta x^\alpha, \  \max(x^\eta ) \leq \min(x^\alpha)  \}. \]

\begin{theorem}\label{th:rifatto}\citep[Lemma 2.7 (iv) and Theorem 2.9]{LRFunt} Let $J$ be a strongly stable ideal generated in degree $m$, 
$p(t)$ be the Hilbert polynomial of $\mathrm{Proj}(T/J)$, $q(t)$ be the polynomial $N(t)-p(t)$, 
and $I$ be the ideal in $T$ generated by 
a $J$-marked set $F$.  

Then,  for every $t\geq m$, $ F^{(t)} $ generates a free $A$-module   of rank $q(t)=\rk ( J_t)$ and  the free $A$-module $ \langle F^{(t)} \rangle$  contains a unique 
$J_t$-marked set $\widetilde{F}^{(t)}$.

Moreover, 
the following conditions are equivalent:
\begin{enumerate}[(i)]
\item\label{it:rifatto_i} $F$ is a $J$-marked basis.
\item\label{it:rifatto_ii}  For all $t\geq m$,  $I_t$ is a free $A$-module with basis $\widetilde{F}^{(t)}$.
\item\label{it:rifatto_iii}  For all $t\geq m$,  $I_t$ and $J_t$ are  free $A$-modules with the same rank.
\item\label{it:rifatto_iv}    $\widetilde{F}^{(m+1)}$ generates $I_{m+1}$ as an A-module.
\item\label{it:rifatto_v} $\wedge^{q(m+1)+1} I_{m+1}=0$.
\end{enumerate}
\end{theorem}

\begin{proof}
For the proof of the first statements and the equivalences of items from (\ref{it:rifatto_i}) to (\ref{it:rifatto_iv}), we refer to \cite{LRFunt}.

For the equivalence of item (\ref{it:rifatto_iv}) with (\ref{it:rifatto_v}), it is enough to consider the matrix of the coefficients of any set of generators 
of $I_{m+1}$ containing the marked set $\widetilde F^{(m+1)}$ and use Gaussian reduction to obtain a new set of generators for $I_{m+1}$.  Every element of 
this new set of generators is also an element of  the free $A$-module $\langle \widetilde F^{(m+1)}\rangle$ if, and only if, $\wedge^{q(m+1)+1} I_{m+1}=0$.
\end{proof}

 If $A=K$ is a field,  a homogeneous ideal $I$ in $T=P\otimes_k K$ is said $s$-regular if it has a  graded free resolution
\[ 0 \rightarrow \dots   \rightarrow \oplus_{j=1}^{t_i} T(-b_{i,j}) \rightarrow  \dots  \rightarrow 
\oplus_{j=1}^{t_0} T(-b_{0,j}) \rightarrow I \rightarrow 0\]
 such that $ b_{i,j} \leq s+i$  for all $b_{i,j}$.
The Castelnuovo-Mumford regularity $\reg(I)$ of $I$  is the smallest such $s$. 
The regularity of the scheme  $X=\Proj (T/I) \subset \PP^n_K$ coincides with that of the saturated ideal $I^\sat$, 
while in general $\reg(X) \leq \reg(I)$ (\citet[Proposition 2.6]{Green}).
The regularity of a strongly stable ideal $J$ is the maximal degree of terms in its minimal monomial basis \citep[Proposition 2.11]{Green}.

\begin{theorem}
 \label{cor:regularity}
Take $J=(J_m)$, and $I$ as in Theorem \ref{th:rifatto} and assume that $I$ satisfies the equivalent properties listed there.
Then $\reg(I)=\reg(J)=m$ and 
$\reg(\Proj(T/I))\leq \reg(\Proj(T/J))$.
\end{theorem}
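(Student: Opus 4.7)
The statement has three components: $\reg(J) = m$, $\reg(I) = m$, and the inequality for $\Proj$. The first is standard: since $J$ is Borel-fixed (and so strongly stable, in characteristic zero) and generated in the single degree $m$, its Eliahou--Kervaire resolution is $m$-linear, hence $\reg(J) = m$.

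For $\reg(I) = m$, the lower bound is immediate. By Theorem \ref{th:rifatto}(iii), $\rank(I_t) = \rank(J_t) = 0$ for $t < m$, so $I$ is minimally generated in degree $m$ by $F$, and $\reg(I) \geq m$ follows. The upper bound is the main step, and I would handle it by a flat deformation argument. Introduce a parameter $t$ and, for each marked polynomial $f_\alpha = x^\alpha - \sum c_{\alpha\gamma} x^\gamma \in F$, consider $\tilde f_\alpha := x^\alpha - t\sum c_{\alpha\gamma} x^\gamma \in T[t]$. The $\tilde f_\alpha$ form a $J$-marked set over $A[t]$, with the same head terms $x^\alpha$; the free-module structure is preserved under the substitution $c_{\alpha\gamma} \mapsto t\,c_{\alpha\gamma}$, so the ideal $\tilde I$ they generate satisfies the equivalent conditions of Theorem \ref{th:rifatto} over $A[t]$. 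Hence $\tilde I_s$ is free over $A[t]$ with basis $\tilde F^{(s)}$ and rank $\rank(J_s)$ for every $s \geq m$, so the family $T[t]/\tilde I$ is flat over $k[t]$, with special fiber $T/J$ at $t=0$ and generic fiber $T/I$ at $t=1$. Upper semicontinuity of Castelnuovo--Mumford regularity along a flat family then gives $\reg(I) \leq \reg(J) = m$.

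Finally, the projective inequality $\reg(\Proj(T/I)) \leq \reg(\Proj(T/J))$ follows from the same family. On $\Proj(T[t]/\tilde I) \to \Spec k[t]$, upper semicontinuity of the cohomology groups $H^i(\mathcal I_{X_s}(j-i))$ on fibers transfers the regularity bound from the special fiber $\Proj(T/J)$ to the generic fiber $\Proj(T/I)$. The main technical point I would watch most carefully is verifying that $\tilde I$ genuinely inherits the marked-basis property from $I$ uniformly over $A[t]$ (rather than only at individual specializations of $t$); this is precisely what the equivalence of (ii)--(iv) in Theorem \ref{th:rifatto} provides, since condition (iv) need only be checked in degree $m+1$ where the $t$-dependence is polynomial. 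Once flatness is in hand, the semicontinuity conclusions are standard.
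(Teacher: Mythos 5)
Your proof of $\reg(J)=m$ (Eliahou--Kervaire) and of the lower bound $\reg(I)\ge m$ is fine, but the central step --- the flat degeneration --- does not work. The claim that the substitution $c_{\alpha\gamma}\mapsto t\,c_{\alpha\gamma}$ preserves the marked-basis property is false in general: the locus of coefficients for which a $J$-marked set is a $J$-marked basis is cut out by polynomial equations that are \emph{not} homogeneous for the uniform grading $\deg c_{\alpha\gamma}=1$, so it is not a cone through the origin. Concretely, take $J=(x_2^2,x_1x_2,x_1^2)\subset k[x_0,x_1,x_2]$ with marked set $f_1=x_2^2-a_1x_0x_2-a_2x_0x_1-a_3x_0^2$, $f_2=x_1x_2-b_1x_0x_2-b_2x_0x_1-b_3x_0^2$, $f_3=x_1^2-c_1x_0x_2-c_2x_0x_1-c_3x_0^2$: reducing $x_1f_1-x_2f_2$ to normal form, the coefficient of $x_0^2x_2$ gives the necessary condition $b_1b_2-a_2c_1+b_3=0$, which mixes degrees $1$ and $2$ in the $c$'s. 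After scaling it becomes $t^2(b_1b_2-a_2c_1)+t\,b_3=t(t-1)(-b_3)$ on the marked scheme, which is nonzero for $t\ne 0,1$ whenever $b_3\ne 0$ --- and $b_3\ne 0$ for a generic triple of points. By Theorem \ref{th:rifatto} the rank of $\tilde I(t)_{m+1}$ then strictly exceeds $q(m+1)$, so your family $T[t]/\tilde I$ is not flat over $k[t]$. This is precisely where marked bases differ from Gr\"obner bases: the head terms are not initial terms for any term order or weight vector, so there is no one-parameter flat degeneration of $I$ to $J$ in general. Even granting flatness, semicontinuity would transfer the regularity bound from the fiber at $t=0$ only to some open neighbourhood of $0$ in $\mathbb{A}^1$, which need not contain $t=1$; in the Gr\"obner setting this is rescued by the $\mathbb{G}_m$-action identifying all nonzero fibers, an action your family does not have.

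The paper's argument is entirely different and avoids any deformation. It compares the minimal free resolutions of $I$ and $J$ directly: both ideals are generated by $q(m)$ forms of degree $m$ and have the same Hilbert function (Theorem \ref{th:rifatto}, condition (iii)), and an induction on homological degree, using the Hilbert-function equality together with minimality of the resolution, forces all the graded Betti numbers of $I$ to coincide with those of $J$; hence $\reg(I)=\reg(J)=m$. Note that this yields \emph{equality} of Betti numbers, which no semicontinuity argument could give (semicontinuity would only bound them from above). For the projective inequality the paper does not reuse the same family either: it sets $t=\reg(J^{\sat})$, invokes \citet[Theorem 4.7]{BCLR} to see that the ideal generated by $(I^{\sat})_t$ is generated by a marked basis over the one generated by $(J^{\sat})_t$, applies the first part to that pair, and concludes via the fact that the saturated ideal has minimal regularity among ideals with the same saturation. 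Your sketch contains no substitute for this step relating the saturations of $I$ and $J$, which is not automatic. If you want to salvage a deformation-style proof you would need a genuinely flat family, which the marked-basis setting does not provide.
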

\begin{proof}
 We expand the proof of \citet[Proposition 4.6]{CR}.  
 Take the minimal free resolutions of the ideals $I$ and $J$. 
 By hypothesis, the ideals $I$ and $J$  are  generated by $q_0:=q(m)$ homogeneous polynomials of degree $m$. 
 Hence, in their minimal resolutions, the initial free module is $T^{q_0}(-m) $ and those of the $h$-th syzygies  are  direct
 sums of modules  $T(-m-h-k_{h,j})$  for non-negative integers $k_{h,j}$. 
  Moreover $\reg(J)=m$, so that its $h$-syzygies are direct sums of modules  $T(-m-h)$.  We need to prove that the same is true for
 the $h$-th syzygies of $I$.

 Fix an an integer
 $i>0$ and suppose that the modules of $h$-th syzygies of $I$ and $J$ are equal for each $h<i$:
\[   \oplus_{j=1}^{t_i} T(-m-i-d_{i,j}) \rightarrow T^{q_{i-1}}(-m-i+1) \rightarrow \dots  \rightarrow T^{q_0}(-m) \rightarrow I \rightarrow 0\]
\[  \qquad \qquad    T^{q_i}(-m-i) \rightarrow T^{q_{i-1}}(-m-i+1) \rightarrow \dots  \rightarrow T^{q_0}(-m) \rightarrow J \rightarrow 0\]
 Now we compare the module $\Theta:=\oplus_{j=1}^{t_i} T(-m-i-d_{i,j}) $ of $i$-th syzygies of $I$ with $ T^{q_i}(-m-i) $,  the one of  $J$,
 taking in account that  the two ideals share, by hypothesis, the same Hilbert function (Theorem \ref{th:rifatto} (\ref{it:rifatto_iii})). 
Let $x$ be the number of terms in $\Theta$ with $d_{i,j} =0$. By     the equality 
$\rk (I_{m+i})=\rk(J_{m+i})$,  we obtain  $x =q_i$.
Assume for a moment that $\Theta$ has a component with $d_{i,j} >0$
and take the one with $d =d_{i,j}$ minimal. The computation of the dimensions in degree $m+i+d$ gives that
any addendum $T(-m-i-d)$ of $\Theta$ has image contained in that of the submodule $T^{q_i}(-m-i)$ of $\Theta$,
contradicting the minimality of the resolution of $I$.

Since $I$ and $J$ have minimal free resolutions with the same Betti numbers, we get $\reg (I) = \reg (J)$.

Set $t:=\reg(J^\sat)$ and let  $J'$,  $I'$ be the ideals generated by $(J^\sat)_t$ and $(I^\sat)_t$ respectively.   In \citet[Theorem 4.7]{BCLR} it is proved
that $ I'$ is generated by a $J'$-marked basis. By applying the first part of the proof to the ideals $J'$ and $I'$ we obtain $\reg(I') = \reg(J')=t=\reg(J^\sat)$. 
  Among all the homogeneous ideals with the same saturation, the saturated
ideal is the one with minimal regularity. Then  $\reg(\Proj(T/I))=\reg(I^{\sat}) 
\le \reg(I') =\reg(J^\sat)=\reg(\Proj(T/J))$.
\end{proof}

For a fixed saturated strongly stable ideal $J$, it is possible to define a functor $\underline{\mathbf{Mf}}_{J_{\geq s}}$ between the category of $k$-algebras 
and that of sets. For every $k$-algebra $A$,  $\underline{\mathbf{Mf}}_{J_{\geq s}}(A)$ is the set of ideals in $A[x_0,\dots,x_n]$ generated by a 
$J_{\geq s}$-marked basis. Such a functor is represented by a closed subscheme of $A^{D}$, for a suitable $D$ depending on $s$, $n$ and the Hilbert polynomial 
of $\Proj(T/J)$. For the main features of $\underline{\mathbf{Mf}}_{J_{\geq s}}$  and the proof of its representability see \citep{LRFunt}.

\section{Open cover of $\GrassFunctor{p(s)}{N(s)}$}
As observed in Remark \ref{zariski},  $\HilbFunctorR{p(t)}{n}{r'}$ is a Zariski sheaf and it is a subfunctor of the representable functor $\GrassFunctor{p(s)}{N(s)}$. 
Then it is sufficient to check its representability  focusing on an open
 cover of $\GrassFunctor{p(s)}{N(s)}$ \citep[Proposition 2.7, Corollary 2.8]{HaimSturm}. Following the line of 
 \citet{BLMR}, we consider a family of open subfunctors of $\GrassFunctor{p(s)}{N(s)}$, whose definition involves the action of the linear group
$\PGL(n+1)$.

 For every element $g\in \PGL(n+1):=\PGL_\QQ(n+1)$, $\widetilde{g}$ is the automorphism induced by  $g$ on $P$, $T$ or on  Grassmann and Hilbert functors, and $g\centerdot$ is the
corresponding action on an element.

We fix a basis for the $A$-module  $P_s\otimes_k A$ choosing the terms of degree $s$ ordered decreasingly with respect to $\RevLex$.
We denote by
$x^{\alpha(j)}$ the $j$-th term of this basis, with  $j=1, \dots N(s)$. In what follows we identify
 $P_s\otimes_k A$ with $A^{N(s)}$ by the chosen basis.

Every set of  $p(s)$ indices $\mathcal{I}=  (i_1,\ldots,i_{p(s)})$, $1\leq i_1 < \dots <i_{p(s)} \leq N(s)$, corresponds
to a  Pl\"ucker coordinate $\Delta_{\mathcal{I}}$ of the  Pl\"ucker embedding   of $\GrassScheme{p(s)}{N(s)}$.   We  associate to  $\mathcal{I}$ the monomial
ideal  $J(\mathcal{I})$  generated by the terms of degree $s$ whose indices  are not elements of $\mathcal I$.

\begin{notation}\label{not}
 For every $r'\leq s \leq r$ we denote by 
 \begin{itemize}
\item $\mathcal{S}^{[s]}$ the set of multi-indices  $\mathcal{I}\subset \{1, \dots, N(s)\}$, $\vert \mathcal I\vert=p(s)$, such that 
$J(\mathcal{I})$ is a Borel ideal;
\item   $\mathcal{S}^{[r',s]}_{p(t)}$ the set of $\mathcal I\in\mathcal{S}^{[s]}$ such that $\Proj(P/J(\mathcal I))$
has Hilbert polynomial $p(t)$
and   is $r'$-regular;   
\item    $\mathbb{B}^{[r']}_{p(t)}$ the set of saturated Borel ideals defining points of
$\HilbSchemeR{p(t)}{n}{r'}$;
\item  ${L}^{[r',s]}_{p(t)}$ the closed subset of $\GrassScheme{p(s)}{N(s)}$ defined by the ideal 
$$\left( g\centerdot  \Delta_{\mathcal{I}} \ \vert \ \forall \  g \in \PGL(n+1), \ \forall \
 \mathcal{I} \in \mathcal{S}^{[r',s]}_{p(t)} \right).$$ 
  \end{itemize}
\end{notation}

\begin{lemma}\label{corrispondenza}
There is a natural 1:1 correspondence
 \[ \alpha^{[r',s]}\colon \mathbb{B}^{[r']}_{p(t)} \rightarrow \mathcal{S}^{[r',s]}_{p(t)}\]
 given by $\alpha^{[r',s]}(J) =\mathcal{I}$ where 
 $J(\mathcal{I})^{\sat}=J$.
\end{lemma}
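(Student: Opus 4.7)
The plan is to construct an inverse $\beta\colon \mathcal{S}^{[r',s]}_{p(t)}\to \mathbb{B}^{[r']}_{p(t)}$ by $\beta(\mathcal I):=J(\mathcal I)^{\sat}$ and verify that $\alpha^{[r',s]}$ and $\beta$ are mutually inverse. The decisive input, used repeatedly, is the following: if $J$ is a saturated Borel-fixed ideal with $\reg(J)\le r'\le s$, then the truncated ideal $(J_s)$ and $J$ coincide in every degree $\ge s$; in particular $(J_s)^{\sat}=J$ and $\dim_k J_s=q(s)$. Indeed, since the regularity of a Borel-fixed ideal equals the maximal degree of its monomial basis, $J$ is generated in degrees $\le r'\le s$, so $J_{\ge s}$ is generated by $J_s$; and $s\ge \reg(J)$ forces the Hilbert function of $S/J$ to equal its Hilbert polynomial $p(s)$ in degree $s$.

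Well-definedness of $\beta$: for $\mathcal I\in\mathcal{S}^{[r',s]}_{p(t)}$, the ideal $J(\mathcal I)$ is Borel-fixed by assumption, and saturation preserves Borel-fixedness since the Borel group stabilises both $J(\mathcal I)$ and the irrelevant ideal. The scheme $\Proj(S/J(\mathcal I)^{\sat})=\Proj(S/J(\mathcal I))$ has Hilbert polynomial $p(t)$ and regularity $\le r'$, so $\beta(\mathcal I)\in \mathbb{B}^{[r']}_{p(t)}$. Well-definedness of $\alpha^{[r',s]}$: given $J\in\mathbb{B}^{[r']}_{p(t)}$, let $\mathcal I$ be the complement in $\{1,\dots,N(s)\}$ of the indices of the monomial basis of $J_s$. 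By the observation above $|\mathcal I|=N(s)-q(s)=p(s)$, and $J(\mathcal I)=(J_s)$ is Borel. Since $(J_s)^{\sat}=J$, the scheme $\Proj(S/J(\mathcal I))$ has Hilbert polynomial $p(t)$ and regularity $\reg(J)\le r'$, so $\mathcal I\in \mathcal{S}^{[r',s]}_{p(t)}$.

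Finally, $\beta(\alpha^{[r',s]}(J))=(J_s)^{\sat}=J$ is the key observation. For the other composition, set $J:=\beta(\mathcal I)=J(\mathcal I)^{\sat}$; verifying $\alpha^{[r',s]}(J)=\mathcal I$ amounts to the equality $J_s=(J(\mathcal I))_s$, where $\supseteq$ is trivial and equality follows because both sides are $k$-vector spaces of rank $q(s)$. The only conceptual difficulty is reconciling the two notions of regularity at play — that of a Borel-fixed ideal (maximal degree of its basis) versus that of the projective scheme it defines (regularity of its saturation) — and controlling the possible discrepancy between $J(\mathcal I)$ and its saturation in degree $s$; the hypothesis $s\ge r'$ is precisely what neutralises this discrepancy, after which the verification is bookkeeping.
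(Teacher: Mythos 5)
Your proof is correct and takes essentially the same route as the paper's: both arguments hinge on the observation that the regularity bound $\reg(J)\le r'\le s$ forces $\rank(J_s)=q(s)$ and $(J_s)^{\sat}=J$, so that $J\mapsto J_{\ge s}$ and $\mathcal I\mapsto J(\mathcal I)^{\sat}$ are mutually inverse. You simply spell out the well-definedness and the two compositions that the paper's two-sentence proof leaves implicit.
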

\begin{proof}
 Every $J \in \mathbb{B}^{[r']}_{p(t)}$  contains exactly $q(s)=N(s)-p(s)$ terms of degree $s$, for every $s\geq r'$.
 Indeed the assumption on the regularity implies that $\rk (J_s)=q(s)$. 
 Hence, for every $s\geq r'$, $J\in \mathbb{B}^{[r']}_{p(t)}$ 
 identifies a set of $p(s)$  indices $\mathcal{I}\subset \{ 1, \dots ,N(s)\}$ such that $J=J(\mathcal{I})^\sat$. 
Viceversa, for every  $\mathcal{I} \in \mathcal{S}^{[r',s]}_{p(t)}$, the  saturated  ideal $J(\mathcal{I})^{\sat}$  is Borel fixed, because
 $J(\mathcal{I})$ is,  and taking saturation preserves this property. Moreover, $J(\mathcal{I})^{\sat}\in \mathbb{B}^{[r']}_{p(t)}$, because 
 it  is the saturated  ideal defining  $\Proj( P /J(\mathcal I))$,  that has Hilbert polynomial $p(t)$ and is  $r'$-regular.
\end{proof}

\medskip

Let $(\b_j, j=1, \dots, p(s))$ denote the standard basis of $A^{p(s)}$. For every  $\mathcal{I}=  (i_1,\ldots,i_{p(s)})$,   and $g \in \PGL(n+1)$, we consider 
the injective morphism
\[
\begin{split}
\Gamma^{[s]}_{\mathcal{I}}: A^{p(s)} &\rightarrow A^{N(s)} \\
\b_j & \mapsto x^{\alpha(i_j)}
\end{split}
\]

and the subfunctor $\underline{\mathbf{G}}^{[s]}_{\mathcal{I},g}$ of $\GrassFunctor{p(s)}{N(s)}$ given by:
\begin{equation}\label{liberi2}
\underline{\mathbf{G}}^{[s]}_{\mathcal{I},g}(A) = \left\{ \begin{array}{c} \text{locally free quotient }
A^{N(s)} \stackrel{\pi_W}
{\longrightarrow} A^{N(s)}/W\text{ of rank } p(s) \\ \text{such that } \pi_W \circ \widetilde{g}\circ 
\Gamma^{[s]}_{\mathcal{I}}
 \text{ is surjective}\end{array} \right\}.
\end{equation}

\begin{remark}\label{modlibero} The elements of $\underline{\mathbf{G}}^{[s]}_{\mathcal{I},g}(A)$ correspond in a natural way to ideals in $P\otimes_k A$
 having a special set of generators. Let us fix by simplicity $g=Id$. 
The quotient  $A^{N(s)}/W$ appearing in \eqref{liberi2} is actually a free module of rank $p(s)$,  since the map $\pi_W\circ \Gamma_{\mathcal I}^{[s]}$ is
defined between projective modules of the
same finite rank and it is surjective.
 Hence, also the submodule $W$ is  free of rank $q(s)$, and $A^{N(s)}\simeq A^{N(s)}/W\oplus W$.

 If we identify  $A^{N(s)}/W$  
with  $A^{p(s)}=\left\langle x^{\alpha(i)} , i\in \mathcal I\right\rangle$   via $\pi_W \circ\Gamma^{[s]}_{\mathcal{I}}$, we reduce the problem to 
study surjective morphisms $A^{N(s)} \rightarrow A^{p(s)}$.   These maps 
are characterized 
by their kernel or, equivalently, by the images in $A^{p(s)}$ of the elements  $x^{\alpha(i)}$ such that    $i\notin \mathcal I$. In this way we 
obtain a set of generators for the kernel $W$ of the type $f_{\alpha(j)}=x^{\alpha(j)}-\sum c_{ji} x^{\alpha(i)}$ with $j\notin \mathcal I$,
$i \in \mathcal I$, and $c_{ji}\in A$. 

Therefore,  in this setting, it is natural to identify the elements of $\underline{\mathbf{G}}^{[s]}_{\mathcal{I},g}(A)$ with  the ideals in  
$P\otimes_k A$ generated by   $J(\mathcal I)$-marked sets. 
\end{remark}

\begin{prop}\label{pr:BorelCoverG}
The open subfunctor $\underline{\mathbf{G}}^{[s]}_{\mathcal{I},g}$ is represented by the open subscheme  ${\mathbf{G}}^{[s]}_{\mathcal{I},g}$ of 
$\GrassScheme{p(s)}{N(s)}$ given by the non-vanishing of  $g\centerdot \Delta_{\mathcal I}$.

The scheme ${\mathbf{G}}^{[s]}_{\mathcal{I},Id}$ is naturally isomorphic to $\mathrm{Spec}(k[C])$, where $C$ is the  set of variables
$\lbrace C_{ji} \vert \, j \notin \mathcal I, \ i\in \mathcal I\rbrace$. 
\end{prop}
\begin{proof} In \citet[Lemma 4.2]{BLMR} the authors prove that the collection of subfunctors \eqref{eq:GrBorelSubFunctors} 
covers the Grassmann functor $\GrassFunctor{p(s)}{N(s)}$.  The  open subfunctor $\underline{\mathbf{G}}^{[s]}_{\mathcal{I},Id}$ is represented by 
the open subscheme of $\GrassScheme{p(s)}{N(s)}$ defined by 
the non-vanishing of the Pl\"ucker coordinate $\Delta_{\mathcal I}$ \citep[Section 2]{BLMR}. As a consequence,  we get the claim 
 for
$\underline{\mathbf{G}}^{[s]}_{\mathcal{I},g}$. 

For the second 
statement, we refer to \cite[Section 2, in particular Formula 2.3]{BLMR}. 
The scheme isomorphism between ${\mathbf{G}}^{[s]}_{\mathcal{I},Id}$ and $\mathrm{Spec}(k[C])$ is the natural one given by local coordinates on the 
Grassmannian \citep[see Formula 2.3]{BLMR}.  Notice that, for every  $k$-algebra $A$, the $J(\mathcal I)$-marked set
corresponding to an element of $\underline{\mathbf{G}}^{[s]}_{\mathcal{I},Id}(A)$ presented in Remark \ref{modlibero} is in fact 
determined by the list of the coefficients $c_{ji}\in A$, i.e.,  by a $k$-algebras morphism $k[C]\rightarrow A$.  
\end{proof}

\begin{corollary}\label{ricpiccoloG}
The collection of open subfunctors
\begin{equation}\label{eq:GrBorelSubFunctors}
\left\{\underline{\mathbf{G}}^{[s]}_{\mathcal{I},g} \quad \Big\vert\quad \begin{array}{l}\forall\ g
\in \PGL(n+1)\text{ and } \forall\
 \mathcal{I} \in \mathcal{S}^{[s]} \end{array}\right\}
\end{equation}
covers the Grassmann functor $\GrassFunctor{p(s)}{N(s)}$ and the collection of the corresponding open subschemes ${\mathbf{G}}^{[s]}_{\mathcal{I},g}$
covers $\GrassScheme{p(s)}{N(s)}$.

The collection of   open subschemes
\begin{equation*}
\left\{{\mathbf{G}}^{[s]}_{\mathcal{I},g}\quad \Big\vert\quad
\forall\ g
\in \PGL(n+1) \text{ and }\forall\ \mathcal{I} \in \mathcal{S}^{[r', s]}_{p(t)}\right\}
\end{equation*}
covers $\GrassScheme{p(s)}{N(s)}\setminus {L}^{[r',s]}_{p(t)}$.
\end{corollary}

\begin{proof}
Both statements follow from Proposition \ref{pr:BorelCoverG}, remembering the definitions of $\mathcal {S}^{[r', s]}_{p(t)}$ and ${L}^{[r',s]}_{p(t)}$ 
given in Notations \ref{not}.
\end{proof}

\section{Open cover of  $\HilbFunctorR{p(t)}{n}{r'}$ and representability}

   In order to prove that  $\HilbFunctorR{p(t)}{n}{r'}$ is representable and embed its representing scheme  as a locally closed subscheme in
   $\GrassScheme{p(s)}{N(s)}$, we follow the lines of \citet{BLMR}.
In that paper, the authors characterize the elements in $\HilbScheme{p(t)}{n}$  in  $\GrassScheme{p(r)}{N(r)}$ using Macaulay's 
estimate of growth and Gotzmann's persistence theorem (see \citet{Green}, \citet{gotz}).
These two results allow a criterion to establish whether an element of $ \GrassScheme{p(r)}{N(r)}$ is also an element of  $\HilbScheme{p(t)}{n}$.  
Indeed, $ P/I$ has Hilbert polynomial $p(t)$ if, and only if,  the rank of the  module   $I_{r+1}$ does not exceed $q(r+1)=N(r+1)-p(r+1)$, namely 
$\wedge^{q(r+1)+1}I_{r+1}$ vanishes.  

However, these two results do not hold if $s<r$, as shown in the following example.

\begin{example}\label{ex:nonGot}
We consider the Hilbert polynomial  $p(t)=2t+2$, whose Gotzmann number is $r=3$. We fix $r'=s=2$. 
The set $\HilbFunctorR{2t+2}{3}{2}(k)$ is non-empty: for instance, it contains the point defined by  $J=(x_3^2, x_3x_2,x_2^2,x_3x_1)$.

We embed $\HilbFunctorR{2t+2}{3}{2}$ in $\GrassFunctor{6}{10}$ and we consider  $I=(x_3^2, x_3x_2, x_3x_1,x_3x_0)\in\GrassFunctor{6}{10}(k)$: $I$ 
is generated by $q(2)=4$ terms and
$\rk(I_3)=10 <q(3)=12$. Hence
 the conditions $\rk (I_{r'})=q(r')$,  $\rk (I_{r'+1})\leq q(r'+1)$ hold true, though 
$I\notin \HilbFunctorR{2t+2}{3}{2}(k)$: indeed, $\Proj(P/I)$ has Hilbert polynomial $\frac{t^2+3t+2}{2}\neq p(t)$.
\end{example}
Instead of Macaulay's estimate of growth and Gotzmann's persistence theorem, we will use the properties of marked bases given in 
Theorem \ref{th:rifatto}, in particular the equivalence between (\ref{it:rifatto_ii}), (\ref{it:rifatto_iii}) and (\ref{it:rifatto_v}).

\begin{definition}\label{famiglia} 
For every $g \in \PGL(n+1)$, $s\geq r'$ and $
 \mathcal{I} \in \mathcal{S}^{[s]}$, 
 we will denote by $\underline{\mathbf{H}}^{[r',s]}_{\mathcal{I},g} $ the open subfunctor of $\HilbFunctorR{p(t)}{n}{r'}$:
\[\left(\underline{\mathscr{H}}^{[s]}\right)^{-1}\left(\underline{\mathbf{G}}^{[s]}_{\mathcal{I},g}\right) \cap\HilbFunctorR{p(t)}{n}{r'}.\]
\end{definition}

 By Corollary \ref{ricpiccoloG}, it immediately follows that, for every $s\geq r'$,  the  collection of open subfunctors 
$\underline{\mathbf{H}}^{[r',s]}_{\mathcal{I},g} $  covers of 
  $\HilbFunctorR{p(t)}{n}{r'}$. In general, this collection  depends on $s$. However,  for every $s\geq r'$ we will extract  a  suitable subcollection
that it is still an open cover of  $\HilbFunctorR{p(t)}{n}{r'}$ and does not depend on $s$.  In particular, for $s=r'=r$, the open subfunctors 
$\underline{\mathbf{H}}^{[r,r]}_{\mathcal{I},g} $ are exactly the open cover of $\HilbFunctor{p(t)}{n}=\HilbFunctorR{p(t)}{n}{r}$ defined in \citet{BLMR}.

First of all, we observe that some of the subfunctors $\underline{\mathbf{H}}^{[r',s]}_{\mathcal{I},g} $  can be empty.  
For instance, $\underline{\mathbf{H}}^{[r,r]}_{\mathcal{I},\textit{Id}} (A)=\emptyset$ 
 if  the Hilbert function of $\Proj(P/J(\mathcal{I}))$ is lower than $p(t)$ in degrees $t\gg 0$. In fact, in this case the ideal 
 corresponding to any  point of $\underline{\mathbf{G}}^{[r]}_{\mathcal{I},Id}(A)$ has Hilbert function lower than $p(t)$ for $t\gg0$ (Theorem \ref{th:rifatto}).

\begin{example}\label{ex:nonGot2}
Let us consider  $p(t)=2t+1$, the Hilbert polynomial of the conics, whose  Gotzmann number is $r=2$. Hence, $\HilbFunctorR{2t+1}{3}{2}=\HilbFunctor{2t+1}{3}$ 
is non-empty 
and  can be embedded in $\GrassFunctor{5}{10}$. 
However, for the set of indices $\mathcal{I}=\{6,7,8,9,10\}$, the open subfunctor  $\underline{\mathbf{H}}^{[2,2]}_{\mathcal{I},\mathit{Id}}$ is empty, since 
$J(\mathcal I)=(x_3^2, x_3x_2,x_2^2, x_3x_1,x_2x_1)$ is a Borel ideal with 
 Hilbert polynomial $t+3$, lower than $ p(t)=2t+1$ for every $t>2$. 
\end{example}

 Moreover, for any two fixed integers $s,s'$ such that  $r'\leq s' <s \leq r$, there is not a canonical  way to associate to every 
$\underline{\mathbf{H}}^{[r',s]}_{\mathcal{I},g} $ 
an open subfunctor $\underline{\mathbf{H}}^{[r',s']}_{\mathcal{I'},g} $.  By Lemma \ref{corrispondenza}, we obtain such a 1:1 correspondence,  if we 
consider the
subcollection of the $\underline{\mathbf{H}}^{[r',s]}_{\mathcal{I},g} $  with $\mathcal I \in \mathcal{S}^{[r', s]}_{p(t)}$ and the  bijection $\alpha^{[r',s]}$ 
given in 
Lemma \ref{corrispondenza}. 

\begin{remark}\label{uguale} Let $A$ be a $k$-algebra and fix any $\mathcal{I} \in \mathcal{S}^{[r', s]}_{p(t)}$. 
In Remark \ref{modlibero} we observe that we may  think of $\underline{\mathbf{G}}_{\mathcal{I},Id}^{[s]}(A)$ as to 
the set of ideals generated by  $J(\mathcal I)$-marked sets. Among them, the ideals generated by 
$J(\mathcal I)$-marked bases are precisely those 
 with  Hilbert polynomial $p(t)$, i.e.,  those defining elements of $\HilbFunctor{p(t)}{n}(A)$.  
Moreover,  for every    such ideal $I$, the regularity of $\Proj(P\otimes_k A/I)$  is bounded by  the regularity  of
$\Proj(P\otimes_k A/(J(\mathcal I)))$, which is at most $r'$, by Theorem \ref{cor:regularity}.  
Furthermore, $\Proj(P\otimes_k A/I)$ is $A$-flat as shown in Lemma \ref{piatto}.
\end{remark}

With an abuse of notation,  we sometimes identify the functor  $\underline{\mathbf{H}}^{[r',s]}_{\mathcal{I},g} $  with 
its isomorphic image in   $\underline{\mathbf{G}}^{[s]}_{\mathcal{I},g}$ or with the $J(\mathcal I)$-marked functor
$\MFFunctor{J(\mathcal I)}$,  defined in  \citet[Section 3]{LRFunt} developing the concept of marked basis.

\begin{theorem}\label{lem:HilbRBorelSubFunctors} 
For every $s\geq r'$, the collection of open subfunctors
\begin{equation}\label{eq:HBorelSubFunctors}
\left\{\underline{\mathbf{H}}_{\mathcal{I},g}^{[r',s]} \quad \Big\vert\quad \begin{array}{l}\forall\ g \in \PGL(n+1)\text{ and } \forall\
 \mathcal{I} \in \mathcal{S}^{[r',s]}_{p(t)} \end{array}\right\}
\end{equation}
covers $\HilbFunctorR{p(t)}{n}{r'}$ and does not depend on $s$.

Moreover, for every  Borel ideal   $J$ in $ \mathbb{B}^{[r']}_{p(t)}$,  if
 $\mathcal I =\alpha^{[r',s]}(J)$, and  $\mathcal K =\alpha^{[r',r]}(J)$ 
 as in Lemma \ref{corrispondenza}, then 
\begin{enumerate}[(i)]
\item \label{lem:HilbRBorelSubFunctors_i} 
$\underline{\mathbf{H}}^{[r,r]}_{\mathcal{K},g}$, $ \underline{\mathbf{H}}^{[r',r]}_{\mathcal{K},g}$ and $ \underline{\mathbf{H}}^{[r',s]}_{\mathcal{I},g}$ 
are equal as subfunctors of $\HilbFunctor{p(t)}{n}$.
\item \label{lem:HilbRBorelSubFunctors_ii}   $\underline{\mathbf{H}}^{[r',s]}_{\mathcal{I},g}$  is  a
closed subfunctor of $\underline{\mathbf{G}}^{[s]}_{\mathcal{I},g} $ and it is the functor of points of a closed subscheme ${\mathbf{H}}^{[r',s]}_{\mathcal{I},g}$ of 
$\mathbf{G}^{[s]}_{\mathcal{I},g}$.
\end{enumerate}
\end{theorem} 

\begin{proof} 
We obtain the first statement arguing as in  \citet[Proposition 4.9]{BLMR}. It is sufficient to prove that  every  element of 
$\HilbFunctorR{p(t)}{n}{r'}(A)$ is also an element of  $\underline{\mathbf{H}}^{[r',s]}_{\mathcal{I},g}(A)$, for some $g \in \PGL(n+1)${ and } some 
$\mathcal{I} \in \mathcal{S}^{[r',s]}_{p(t)}$, assuming that $A=K$ is a field. 

Let $I=(I_s)$ be the ideal corresponding to such an element. For a general $g\in \PGL({n+1})$, the initial ideal of $g^{-1}\centerdot I$ 
w.r.t.  $\texttt{DegRevLex}$ is a Borel ideal $J$ generated in degree $s$. Furthermore,   $\reg(\Proj \left( P\otimes_k K/J \right) )=
\reg( \Proj \left(P\otimes_k K/I\right))\leq r' $ (\citet{BS87}, see also \citet[Corollary 20.21]{e2}). Hence, $J^{\sat} \in\mathbb{B}^{[r']}_{p(t)}$ and 
$\mathcal{I}:=\alpha^{[r',s]}(J^{\sat})\in \mathcal{S}^{[r',s]}_{p(t)}$.  

Moreover, the \Gr\ basis of $g^{-1}\centerdot I$ is a $J$-marked basis, so that 
${g}^{-1} \centerdot  I\in \MFFunctor{J}(K)=\underline{\mathbf{H}}_{\mathcal{I},\mathit{Id}}^{[r',s]}(K)$, hence, 
$  I\in \underline{\mathbf{H}}_{\mathcal{I},\mathit{g}}^{[r',s]}(K)$.

The independence of this  open cover from  $s$ will follow once proved (\ref{lem:HilbRBorelSubFunctors_i}).

 It is sufficient to prove statements (\ref{lem:HilbRBorelSubFunctors_i}) and  (\ref{lem:HilbRBorelSubFunctors_ii}) in the case $g=Id$.
\begin{enumerate}[(i)]
\item   The equality $\underline{\mathbf{H}}^{[r,r]}_{\mathcal{K},Id}=\underline{\mathbf{H}}^{[r',r]}_{\mathcal{K},Id}$ 
is a
consequence of
Remark \ref{uguale}  and Theorem \ref{cor:regularity}: indeed,  for every $k$-algebra $A$ and   $I\in \underline{\mathbf{H}}^{[r,r]}_{\mathcal{K},Id}(A)$,
the scheme $\Proj(P\otimes_kA/I)$ is  $ r'$-regular, hence $I\in \underline{\mathbf{H}}^{[r',r]}_{\mathcal{K},Id}(A)$.

We get the other equality using again Remark \ref{uguale} and Theorem \ref{cor:regularity}. Indeed, 
$\underline{\mathbf{H}}^{[r',r]}_{\mathcal{K},Id}=\MFFunctor{J_{\geq r}}$, 
$\underline{\mathbf{H}}^{[r',s]}_{\mathcal{I},Id}=\MFFunctor{J_{\geq s}}$ and 
the two marked functors coincide as subfunctors of 
$\HilbFunctor{p(t)}{n}$ (see \citet[Theorem 4.7]{LRFunt};
notice that the hypothesis $J\in \mathbb{B}^{[r']}_{p(t)}$ is  essential for this result).

\item Due to what just proved, it is sufficient to show that $\underline{\mathbf{H}}^{[r',s]}_{\mathcal{I},Id}=\MFFunctor{J_{\geq s}}$  
is a closed subfunctor of $ \underline{\mathbf{G}}^{[s]}_{\mathcal{I},Id}$. We can adapt to the case of $k$-algebras the proof
of this same fact given in \citet[Theorem 4.5]{LRFunt}  
 in the more general case of   the
$\mathbb Z$-algebras. 

Let $A$ be a $k$-algebra and consider
$I\in \underline{\mathbf{G}}^{[s]}_{\mathcal{I},Id}(A)$. As shown above in Remark \ref{modlibero} and 
Proposition \ref{pr:BorelCoverG}, 
$I$ is generated  by a $J(\mathcal I)=J_{\geq s}$-marked set 
$F=\{f_{\alpha(j)}=x^{\alpha(j)}-\sum c_{ji} x^{\alpha(i)} \ \vert\ j \notin \mathcal{I},\ i\in \mathcal I\}$ and, by hypothesis,
$\mathrm{Proj}(P\otimes_k A/J)$ 
has Hilbert polynomial $p(t)$
and regularity $\leq  r'\leq s$. 
Under these conditions,  Theorem \ref{th:rifatto} states that   $I\in \MFFunctor{J_{\geq s}}(A)$   if, 
and only if,  $\wedge^{q(s+1)+1}I_{s+1}$ vanishes. Considering the set $\{x_i f\ \vert\  f\in F^{(s)},\ i=0, \dots, n\}$ 
which generates $I_{s+1}$, we obtain closed conditions given by the vanishing of a suitable set of polynomial expression in the
coefficients  $c_{ji}$'s appearing in $F^{(s)}$. In  \citet[Theorem 4.5]{LRFunt} an explicit set of such polynomials is exhibited.
\end{enumerate}
\end{proof}

We are now ready to prove the first main result of the paper.

\begin{proof1} 
By Lemma \ref{zariski}, in order to prove that $\HilbFunctorR{p(t)}{n}{r'}$ is representable, we use \citep[Proposition 2.7, Corollary 2.8]{HaimSturm}. 
We consider the open cover of the representable functor  $\GrassFunctor{p(s)}{N(s)}$ given in Corollary \ref{ricpiccoloG}.
Then, it is sufficient to observe that the subfunctor $\underline{\mathbf{H}}_{\mathcal{I},g}^{[r',s]}$, for $g \in \PGL(n+1)$ and 
$\mathcal{I} \in \mathcal{S}^{[r',s]}_{p(t)}$, is representable and is a closed subfunctor of $\underline{\mathbf{G}}_{\mathcal{I},g}^{[s]}$, 
by Theorem \ref{lem:HilbRBorelSubFunctors}, (\ref{lem:HilbRBorelSubFunctors_ii}). Hence,  $\HilbFunctorR{p(t)}{n}{r'}$ is represented by a 
closed subscheme of $\GrassScheme{p(s)}{N(s)}\setminus L^{[r',s]}_{p(t)}$. 

We now fix $r'=s=r$. By the first part of the proof, $\HilbFunctor{p(t)}{n}=\HilbFunctorR{p(t)}{n}{r}$ is represented by a locally closed subscheme of $\GrassScheme{p(r)}{N(r)}$   and furthermore, it does not intersect $L^{[r,r]}_{p(t)}$, thanks to Theorem \ref{lem:HilbRBorelSubFunctors}. The scheme $\HilbScheme{p(t)}{n}$ is closed in $\GrassScheme{p(r)}{N(r)}$, because
the functor $\HilbFunctor{p(t)}{n}$ satisfies the valuative criterion for properness (see \citet[Lemme 3.7]{Gro}, \citet[III, Proposition 9.8 and Remark 9.8.1]{AG}, \citet[5.5.7]{nit}).

The last statement follows from the previous one, as observed in Remark \ref{zariski}.
\end{proof1}

  \begin{remark}  \citet{BLMR} also prove that    $\HilbFunctor{p(t)}{n}$ is represented by a closed subscheme of $\GrassScheme{p(r)}{N(r)}$, without any flattening stratification argument.
\end{remark}

\section{Equations}\label{sec:equations}

In this section we consider the closed embedding of $\HilbSchemeR{p(t)}{n}{r'}$ in  $\GrassScheme{p(s)}{N(s)}\setminus L^{[r',s]}_{p(t)}$ studied in 
the previous section and 
exhibit global equations in the Pl\"ucker coordinates  defining it.   Our proof generalizes the one given for $\HilbScheme{p(t)}{n}$ by \citet{BLMR}. 
We follow the same lines and use the tools therein developed.

We set $d=\deg(p(t))$ and fix any integers $t\geq s\geq r'$.  Let  $A$ be a  $k$-algebra, $\mathcal{I}\in \mathcal{S}^{[r',s]}_{p(t)}$,  $J:=J(\mathcal{I})$,  
and consider any ideal $I \in 
 \underline{\mathbf{G}}^{[s]}_{\mathcal{I},Id}(A)$.  Let us denote by  $q'(t)$ and $q''(t)$ the following integers:
\[
\begin{array}{lcl}
q'(t)&=&\rk_A A[x_{d+1},\ldots, x_{n}]_t,  \\
q''(t)&=& q(t) -q'(t).
\end{array}
\]

 In this setting,
$J$ contains all the terms in ${A}[x_{d+1}, \dots, x_n]_t$ (see \citep[Lemma 1.4]{BLR}).  
Moreover, $I_t/I_t\cap (x_0, \dots,x_d)$ is a free module of rank $q'(t)$; indeed, looking at the subset $\widetilde F^{(t)}$ of $I_t$ defined 
in Theorem \ref{th:rifatto}, it is obvious that
the canonical map  $I/I\cap (x_0, \dots,x_d)\rightarrow A[x_{0}, \dots, x_n]/(x_0, \dots,x_d)=A[x_{d+1}, \dots, x_n]$ is an isomorphism of
$A$-modules in every degree $t\geq s$.

We can split $I_t$ in a direct sum $I_t=I'_t \oplus I''_t$, where  $I''_t:= I_t \cap (x_0, \dots, x_d)$ and $I'_t$ is
 any complementary  submodule. For $t=s$,  note that  $I'_s$  is free of rank $q'(s)$ and $I''_s$ is  free of rank $q''(s)$, since 
 $\mathcal I\in  \underline{\mathbf{G}}^{[s]}_{\mathcal{I},{Id}}(A)$.

Moreover, 
$I''_{t}$   is the sum of  the following  two submodules: 
\[
\begin{array}{rl} 
I^{(1)}_{t}&:=\langle x_hI_{t-1} \ \vert \ \forall h=0, \dots, d  \rangle,\\
I^{(2)}_{t}&:=\langle x_hI_{t-1}\ \vert \ \forall h=d+1,\dots, n  \rangle \cap (x_0, \dots, x_d). 
\end{array}
\]

 We underline that, for a given $I \in 
 \underline{\mathbf{G}}^{[s]}_{\mathcal{I},{Id}}(A)$,  the  submodules $I''_t$, $I^{(1)}_{t}$, $I^{(2)}_{t}$, 
do not depend on the set of indices $\mathcal{I}$, but only on the fact that $J(\mathcal{I})$ is a Borel ideal with Hilbert polynomial $p(t)$ of degree $d$, 
so that 
$J(\mathcal{I})$ contains all the terms in ${A}[x_{d+1}, \dots, x_n]_t$, for every $t\geq s$. 

Let $\widetilde F^{(t)}$ be the $J(\mathcal I)_{t}$-marked set contained in $I_{t}$ (see Theorem \ref{th:rifatto}).

\begin{lemma}\label{semplificazione}
In the above setting, for every $t\geq s$,  $I^{(1)}_t$ contains the free $A$-module $F''_t$ of rank $q''(t)$ generated by 
$\lbrace f\in\widetilde F^{(t)}\vert\, f\in(x_0,\dots,x_d)\rbrace$, while the complementary set
$\lbrace f\in\widetilde F^{(t)}\vert\, f\notin(x_0,\dots,x_d)\rbrace$ generates a free $A$-module $F'_t$, that 
we can take as $I'_t$.

 Moreover, $I_t=\langle\widetilde F^{(t)}\rangle$ if, and only if, $\wedge^{q''(t)+1}I^{(1)}_t=0$ and $\wedge^{q''(t)}I^{(1)}_t\wedge I''_t=0$.
\end{lemma}
\begin{proof}
The first statement is a consequence of the shape of the polynomials in the marked set $\widetilde F^{(t)}$ and the fact that $J=J(\mathcal I)$ is Borel 
fixed with Hilbert polynomial $p(t)$.

For the second statement, we use Theorem \ref{th:rifatto}, in particular the equivalence between (\ref{it:rifatto_ii}) and (\ref{it:rifatto_v})
giving that $I_t=\langle\widetilde F^{(t)}\rangle$ if, and only if, 
 $\wedge^{q(t)+1}I_t=0$. We can decompose $A[x_0, \dots,x_n]_t$ in the   direct sum 
 $$A[x_{d+1}, \dots,x_n]_t\oplus (x_0, \dots,x_d)_t\cap J \oplus \langle x^\alpha \in  (x_0, \dots,x_d)_t\setminus J\rangle   .$$
 Let $\pi_1$ and $\pi_2$ be   the projections of $A[x_0, \dots,x_n]_t$ to the first and to the second summand. By definition,  $\pi_1(I''_t)=0$, hence 
 $\pi_1(I'_t)=\pi_1(I_t)=A[x_{d+1}, \dots,x_n]_t$, where the second equality is obvious looking at the submodule  $F'_t$.  Indeed, for every $t\geq s$,  $J_t$ 
 contains all the terms in ${A}[x_{d+1}, \dots, x_n]_t$,   and for every $x^\alpha \in {A}[x_{d+1}, \dots, x_n]_t$ there is $f_\alpha \in F'_t\cap \widetilde F^{(t)}$ 
 such that $\Ht(f_\alpha)=x^\alpha$ and $f_\alpha-x^\alpha\in (x_0,\dots,x_n)_t$.  
 Therefore, $\wedge^{q(t)+1}I_t=0$ if, and only if, $\wedge^{q''(t)+1}I''_t=0$.
 
 Moreover, $\pi_2(I''_t)=\pi_2(I^{(1)}_t)$, since $I^{(1)}_t$   contains the submodule $F''_t$ and  $\pi_2(F''_t)=(x_0, \dots,x_d)_t\cap J $, again by the special 
 shape of the polynomials in 
 $\widetilde F^{(t)}\cap F''_t$.  
 Then we conclude that $\wedge^{q''(t)+1} I''_t=0$ is equivalent to $\wedge^{q''(t)+1}I^{(1)}_t=0$ and 
$\wedge^{q''(t)}I^{(1)}_t\wedge \langle f \rangle=0$ for every $f$ in any set of generators of $I''_t$.
\end{proof}

Recall that we fixed the basis of terms of degree $s$ for the $A$-module  $P_s\otimes_k A$  (ordered decreasingly with respect to $\RevLex$).
We denote by
$x^{\alpha(j)}$ the $j$-th term of this basis, with  $j=1, \dots N(s)$. In what follows we identify
 $P_s\otimes_k A$ with $A^{N(s)}$ by $x^{\alpha(j)}\mapsto \a_j$, where $\a_j$ is a basis of $A^{N(s)}$.

 \begin{definition}\cite[Definition 2.3]{BLMR}
Let $A$ be a $k$-algebra, $\mathcal{I} \subset (1, \dots, N)$ with  $\vert \mathcal I\vert=p(t)$  and $I  \in \underline{\mathbf{G}}^{[s]}_{\mathcal{I},Id}(A)$.
For any $1 \leq m \leq q(s)$, and for any $\mathcal{K}\subset\lbrace 1,\dots, N\rbrace$, $\vert\mathcal{K}\vert = p(s)+m$, we define
\begin{equation}
\delta^{(m)}_{\mathcal{K}}(I) := \sum_{\begin{subarray}{c} \mathcal{H} \subset \mathcal{K} \\ \vert\mathcal{H}\vert=m \end{subarray}}
\varepsilon_{\mathcal{K}\setminus\mathcal{H}} \Delta_{\mathcal{K}\setminus\mathcal{H}}(I) \a_{h_1} \wedge \cdots \wedge \a_{h_m}, 
\end{equation}
where $\varepsilon_{\mathcal{K}\setminus\mathcal{H}}\in\lbrace -1,+1\rbrace$ is the signature of a permutation of $1,\dots, N$ depending on $\mathcal K$ and $\mathcal H$ 
\citep[Lemma 2.1]{BLMR}. We also define
\[
\mathcal{B}^{m}(I):= \left\{\delta^{(m)}_{\mathcal{K}}(I)\ \vert\ \forall\ \mathcal{K} \text{ s.t. } \vert \mathcal{K} 
\vert = m+p(s) \right\},
\]
and its subset
\[
\mathcal{B}_{\mathcal{I}}^{m}(I):= \left\{\delta^{(m)}_{\mathcal{K}}(I)\ \vert\ \forall\ \mathcal{K} \text{ s.t. } \vert \mathcal{K} 
\vert = m+p(s) \text{ and } \mathcal{I} \subset \mathcal{K}\right\}.\]
\end{definition}

We need the following two results proved in \citet{BLMR} in the case $s=r'=r$. 
We observe that the quoted proofs work for ideals generated in degree $s$ if the saturation of    $J=(J_s)$ has regularity at most $r'$ and $s\geq r'$.

\begin{prop}\label{cor:costbase}\citep[Proposition 2.4]{BLMR} If  $I  \in \underline{\mathbf{G}}^{[s]}_{\mathcal{I},Id}(A)$, the set
$\mathcal{B}^{m}(I)$
 generates $\wedge^m I_s$ as an $A$-module. 
If $A=K$ and $I_s\in \GrassFunctor{p(s)}{N(s)}(K)$, then $\mathcal{B}^{m}(I)$ generates $\wedge^m I_s$.
\end{prop}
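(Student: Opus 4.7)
The plan is to prove the proposition by explicit calculation on the principal open chart $\underline{\mathbf{G}}^{[s]}_{\mathcal{I}}$ of the Grassmannian, following the classical coordinate description and then reading off the Pl\"ucker coordinates of $\wedge^m L$.

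First, given $L\in\underline{\mathbf{G}}^{[s]}_{\mathcal{I}}(A)$, the composition $\pi_L \circ \Gamma^{[s]}_{\mathcal{I}}\colon A^{p(s)}\to A^{N(s)}/L$ is by hypothesis surjective, and since it is a surjection between locally free $A$-modules of the same rank $p(s)$ it is an isomorphism. Consequently, for each $j\in \mathcal{I}^c:=\{1,\dots,N(s)\}\setminus\mathcal{I}$ there exist unique elements $c_{ji}\in A$ with $\pi_L(\a_j)=\sum_{i\in\mathcal{I}}c_{ji}\,\pi_L(\a_i)$, and
\[
v_j := \a_j-\sum_{i\in\mathcal{I}}c_{ji}\,\a_i\ \in\ L.
\]
One then checks that $\{v_j\}_{j\in \mathcal{I}^c}$ is an $A$-basis of $L$ (the projection onto the $\mathcal{I}^c$-coordinates realizes an isomorphism $L\xrightarrow{\sim}A^{q(s)}$), so that $\{v_{h_1}\wedge\cdots\wedge v_{h_m}\ :\ \mathcal{H}=\{h_1<\cdots<h_m\}\subset\mathcal{I}^c\}$ is an $A$-basis of $\wedge^m L$ of cardinality $\binom{q(s)}{m}$.

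Second, I would prove the identity
\[
v_{h_1}\wedge\cdots\wedge v_{h_m}\ =\ \delta^{(m)}_{\mathcal{K}}(L),\qquad \mathcal{K}:=\mathcal{I}\cup\mathcal{H},
\]
after normalizing $\Delta_{\mathcal{I}}(L)=1$ on the chart. Expanding the left-hand side in the standard basis $\{\a_{\mathcal{H}'}\}$ of $\wedge^m A^{N(s)}$, each factor $v_{h_i}$ contributes either $\a_{h_i}$ or some $\a_k$ with $k\in\mathcal{I}$; consequently only $m$-subsets $\mathcal{H}'\subset\mathcal{K}$ can appear, and the coefficient of $\a_{\mathcal{H}'}$ is a signed $m\times m$ minor of the matrix $(c_{ji})_{j\in\mathcal{H},\,i\in\mathcal{I}}$. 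By the classical identification of the Pl\"ucker coordinate $\Delta_{\mathcal{K}\setminus\mathcal{H}'}(L)$ with the corresponding complementary minor on the chart, this coefficient is exactly $\varepsilon_{\mathcal{K}\setminus\mathcal{H}'}\,\Delta_{\mathcal{K}\setminus\mathcal{H}'}(L)$, and summing over the $m$-subsets $\mathcal{H}'\subset\mathcal{K}$ reproduces the definition of $\delta^{(m)}_{\mathcal{K}}$.

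The first assertion follows at once: the $\binom{q(s)}{m}$ elements of $\mathcal{B}^m_{\mathcal{I}}(L)$, indexed by the $\mathcal{K}\supset\mathcal{I}$ with $|\mathcal{K}|=p(s)+m$, form a basis of $\wedge^m L$ and in particular generate it. For the second assertion, any $L\in\GrassFunctor{p(s)}{N(s)}(K)$ over a field $K$ has at least one nonvanishing Pl\"ucker coordinate $\Delta_{\mathcal{J}}(L)$, hence $L\in\underline{\mathbf{G}}^{[s]}_{\mathcal{J}}(K)$; since $\mathcal{B}^m_{\mathcal{J}}\subset\mathcal{B}^m$, applying the first part with $\mathcal{I}=\mathcal{J}$ yields that $\mathcal{B}^m(L)$ generates $\wedge^m L$. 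The main obstacle is the sign bookkeeping in the second step: one must pin down the convention $\varepsilon_{\mathcal{K}\setminus\mathcal{H}'}$ in the definition of $\delta^{(m)}_{\mathcal{K}}$ so that it exactly matches the signs produced by sorting the wedge products and applying the Laplace-type expansion relating minors of $(c_{ji})$ to Pl\"ucker coordinates on the chart. This is routine but delicate, and once settled it makes the identity above precise.
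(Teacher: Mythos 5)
Your proof is correct, and it is essentially the intended argument: the paper itself does not prove this proposition but quotes it verbatim from \citet[Proposition 2.4]{BLMR}, adding only the remark that the quoted proof carries over from degree $r$ to degree $s$. Your reconstruction --- trivializing the quotient on the chart where $\pi_L\circ\Gamma^{[s]}_{\mathcal{I}}$ is an isomorphism, exhibiting the basis $\{v_j\}_{j\in\mathcal{I}^c}$ of $L$, and identifying $v_{h_1}\wedge\cdots\wedge v_{h_m}$ with $\delta^{(m)}_{\mathcal{K}}(L)$ for $\mathcal{K}=\mathcal{I}\cup\mathcal{H}$ via the Laplace expansion relating minors of $(c_{ji})$ to the Pl\"ucker coordinates (up to the sign convention $\varepsilon_{\mathcal{K}\setminus\mathcal{H}'}$, which you correctly flag as the only delicate point) --- is the standard chart computation underlying the cited result, and your reduction of the field case to a chart with $\Delta_{\mathcal{J}}(L)\neq 0$ is likewise sound.
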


\begin{prop}\label{prop:generatorsBorel}\citep[Proposition 4.2]{BLMR}
In the above setting, let $\mathcal I\in \mathcal S^{[r',s]}_{p(t)}$ and $I\in\underline{\mathbf{G}}^{[s]}_{\mathcal{I},Id}(A)$,  $\mathcal{B}^{(1)} (I)  $  be the 
set of generators of $I_s$ given in Proposition \ref{cor:costbase}. Then:
\begin{enumerate}[(i)]
 \item\label{it:generatorsssBorel_ii} 
$\mathcal{G}^{(1)} (I_{s})  :=\bigcup_{h=0}^d  x_h \mathcal{B}^{(1)}(I _{s} )$ is a set of generators for $I^{(1)}_{s+1} $  and \\
$\mathcal{G}^{(2)} (I_{s})  :=\bigcup_{h=d+1}^n  x_h \left(\mathcal{B}^{(1)}(I _{s} ) \cap (x_0, \dots, x_d ) \right)$ 
is contained in $I^{(2)}_{s+1} $. 
 \item\label{it:generatorsBorel_iii}  We obtain a set of generators for  $I^{(2)}_{s+1} $ as the union of  $\mathcal{G}^{(2)} (I_{s})  $
 and the following set:
\begin{multline*}
\mathcal{G}^{(3)} (I_{s})  :=\left\{ x_{i} \delta^{(1)}_{\mathcal{K}}(I _{s} ) - x_{\overline{i}}\delta^{(1)}_{\overline{\mathcal{K}}}(I _{s} ) 
\in (x_0, \dots, x_d)
 \ \vert \right. \\
\left.    
i, {\overline{i}}\in\{d+1, \dots, n\},\ 
 \delta^{(1)}_{\mathcal{K}}(I _{s} ), 
\delta^{(1)}_{\overline{\mathcal{K}}}(I _{s} )   \in \mathcal{B}^{(1)}(I _{s} ) 
 \right\}.
\end{multline*}
\end{enumerate}
\end{prop}
In \cite{BLMR} the above result is proved with respect to the subset $\mathcal{B}^{(1)}_{\mathcal I}(I _{s} )$ of $\mathcal{B}^{(1)}(I _{s} )$. Hence, the result
also holds as we state it.

As in the following we look for conditions on the 
 Pl\"ucker coordinates, we will write
 $\delta^{(m)}_{\mathcal{K}}$ instead of $\delta^{(m)}_{\mathcal{K}}(I)$ meaning that we are considering the universal
 element and we are looking at its Pl\"ucker coordinates as variables.

Now we prove the second main result of the paper.

\begin{proof2}
We rewrite a large part of the proof of \citet[Theorem 4.7]{BLMR}. We also refer to  \citet[Proposition C.30]{ik} and \citet[Section 4]{BLMR} for a similar way to
get the equations for the Hilbert scheme.

Let $A$ be a $k$-algebra and consider $I\in\underline{\mathbf{G}}^{[s]}_{\mathcal{I},{Id}}(A)$. 
We characterize the subfunctor $\underline{\mathbf{H}}^{[r',s]}_{\mathcal{I},{Id}}$ 
in $\underline{\mathbf{G}}^{[s]}_{\mathcal{I},{Id}}$ as in \citet[Theorem 4.4 ]{LRFunt}.

The ideal  $I\in\underline{\mathbf{G}}^{[s]}_{\mathcal{I},{Id}}(A)$ is also an element of  $\underline{\mathbf{H}}^{[r',s]}_{\mathcal{I},{Id}}(A)$ if, and only if, 
it satisfies the conditions given in Lemma \ref{semplificazione}. We use the set of generators of Proposition \ref{prop:generatorsBorel} for $I^{(1)}_{s+1}$ and 
$I^{(2)}_{s+1}$.

In what follows we will consider ordered sequences  $\mathbf{m}=(m_0, \dots, m_d)$ of non-negative integers and write for sake of simplicity
$ \bigwedge_{i=0}^d x_i \delta^{(m_i)}_{\mathcal{K}_i}$ meaning $x_{i_1}\delta^{(m_{i_1})}_{\mathcal{K}_{i_1}}\wedge \dots \wedge x_{i_b}\delta^{(m_{i_b})}_{\mathcal{K}_{i_b}}$
where the $m_{i_j}$ are those positive in $\mathbf m$.

We obtain a set of generators of $\wedge^{q''(s+1)+1}I^{(1)}_{s+1}$, evaluating the following polynomials at $I$:
\begin{equation}\label{1eq}
\bigwedge_{i=0}^d x_i \delta^{(m_i)}_{\mathcal{K}_i} ,    \quad \forall\	\delta^{(m_i)}_{\mathcal{K}_i}\in \mathcal{B}^{(m_i)} , 
\quad    \sum_{i=0}^{d} m_i=q''(s+1)+1. 
\end{equation}

For what concerns  $\wedge^{q''(s+1)}I^{(1)}_{s+1}\wedge I''_{s+1}$, it is enough to consider a set of generators for  $\wedge^{q''(s+1)}I^{(1)}_{s+1}$ and $f$ 
in $\mathcal{G}^{(2)} (I_{s}) \cup \mathcal{G}^{(3)} (I_{s})$. Hence, we obtain them   evaluating the following polynomials at $I$:

\begin{equation}\label{2eq}
\left(\bigwedge_{i=0}^d x_i \delta^{(m_i)}_{\mathcal{K}_i}\right)\wedge x_h \delta^{(1)}_{\mathcal{K}},    \quad \forall\	\delta^{(m_i)}_{\mathcal{K}_i}\in 
\mathcal{B}^{(m_i)} , 
\quad    \sum_{i=0}^{d} m_i=q''(s+1), \quad  x_h\delta^{(1)}_{\mathcal{K}}  \in \mathcal{G}^{(2)}
\end{equation}

\begin{multline}\label{3eq}
\left( \bigwedge_{i=0}^d x_i \delta^{(m_i)}_{\mathcal{K}_i}    \right)\wedge \left(  x_{i} \delta^{(1)}_{\mathcal{K}}  
- x_{\overline{i}}\delta^{(1)}_{\overline{\mathcal{K}}}   
\right) ,  \\
\forall\	\delta^{(m_i)}_{\mathcal{K}_i}\in \mathcal{B}^{(m_i)} , 
\quad    \sum_{i=0}^{d} m_i=q''(s+1), \quad   x_{i} \delta^{(1)}_{\mathcal{K}}  
- x_{\overline{i}}\delta^{(1)}_{\overline{\mathcal{K}}} \in \mathcal{G}^{(3)}
\end{multline}

Therefore, the scheme $\mathbf{H}_{\mathcal I,Id}^{[r',s]}$ is defined in $\mathbf{G}_{\mathcal I,Id}^{[s]}$ by the vanishing of the $x$-coefficients of the 
polynomials in \eqref{1eq}, \eqref{2eq}, \eqref{3eq}. We  denote by  $\mathfrak{h}_{{Id}}$ this set of polynomials. 
Note that the $x$-coefficients of the polynomials in \eqref{1eq}   are polynomials in the  Pl\"ucker coordinates of degree $\leq d+1$, while those of \eqref{2eq}
and \eqref{3eq} have degree $\leq d+2$. Indeed, their degrees depend on how many of the integers $m_i$ in $\mathbf m$ are positive.

A very crucial point  is the fact that the above equations that we derived with respect to an open Borel subscheme 
 ${\mathbf{G}}^{[s]}_{\mathcal{I},{Id}}$ are independent on the set of indices $\mathcal{I}$. Every set of indices
 $\mathcal{I'} \in \mathcal{S}^{[r',s]}_{p(t)}$  
  gives rise to the same set of equations.
As a consequence the set of equations 
defining ${\mathbf{H}}^{[r',s]}_{\mathcal{I},{Id}}$  as a closed  
sub\-scheme of   ${\mathbf{G}}^{[s]}_{\mathcal{I},{Id}}$ formally coincide with that for every other Borel subscheme
${\mathbf{G}}^{[s]}_{\mathcal{I'},{Id}}$, with $\mathcal{I'} \in \mathcal{S}^{[r',s]}_{p(t)}$.
Indeed, by Proposition \ref{cor:costbase}, we can compute these equations at any ideal $I\in \GrassFunctor{p(s)}{N(s)}(K)$ with $K$ a field and consider them as 
elements in  the projective coordinate ring of $\GrassScheme{p(s)}{N(s)}$
in the Pl\"ucker embedding.

We denote by $\mathfrak{h}_{g}$  the set of polynomials obtained by
the action of an element $g\in \PGL(n+1)$ on $\mathfrak h_{Id}$. 
Note that, for every $\mathcal I\in \mathcal S^{[r',s]}_{p(t)}$,  $\mathfrak{h}_{g}$  
defines   $\mathbf{H}^{[r',s]}_{\mathcal{I},g}$ in $\mathbf{G}^{[s]}_{\mathcal{I},g}$ and its equations are of degree at most $d+2$,
 as the automorphism $g \in \PGL(n+1)$ induces a linear automorphism also on the Pl\"ucker coordinates, which does not modify the 
degree of the relations among them.

Finally,
 we can prove, following the lines of \citet[proof of Theorem 4.7]{BLMR},
that the ideal generated by  the union of the equations 
\[
\mathfrak{H} := \left( \bigcup_{g\in \PGL(n+1)} \mathfrak{h}_{g}\right)
\]
 gives a set of  global equations defining  $\HilbSchemeR{p(t)}{n}{r'} \subset \GrassScheme{p(s)}{N(s)}\setminus {L}^{[r',s]}_{p(t)}$. For convenience,  we  
 denote by $\mathcal{Z} $ the subscheme of
$\GrassScheme{p(s)}{N(s)}\setminus {L}^{[r',s]}_{p(t)}$ defined by $\mathfrak{H} $. 
 
Due to the noetherianity of the  coordinate ring of $\GrassScheme{p(s)}{N(s)}$  in the \Pl\ embedding, we can choose a finite set of generators  $ h_1, \dots, h_m$ 
of $\mathfrak{H}$,
 with $h_i\in \mathfrak{h}_{g_i}$. Moreover, by the invariance of $ \mathfrak{H} $ under the action of $\PGL(n+1)$, also
 $g\centerdot h_1, \dots, g\centerdot h_m$ is a set of generators, so that 
\begin{equation}\label{pochi}\mathfrak{H}  =\left( \mathfrak{h}_{gg_1}\cup \dots \cup\mathfrak{h}_{gg_m}\right)\end{equation}
 for each $g\in \PGL(n+1)$.
Therefore, for every $\mathcal I\in \mathcal S^{[r',s]}_{p(t)}$ and for every $g\in \PGL(n+1)$, we get
$$\HilbSchemeR{p(t)}{n}{r'}\cap (\mathbf{G}^{[s]}_{\mathcal I,gg_1}\cap\dots\cap \mathbf{G}^{[s]}_{\mathcal I,gg_m})=\mathcal Z\cap 
(\mathbf{G}^{[s]}_{\mathcal I,gg_1}\cap\dots\cap \mathbf{G}^{[s]}_{\mathcal I,gg_m}).$$

In order to conclude, it is sufficient to prove that for every $I\in \GrassScheme{p(s)}{N(s)}\setminus {L}^{[r',s]}_{p(t)}$, we can find suitable
${\mathcal I}\in \mathcal S^{[r',s]}_{p(t)}$ and $g \in \PGL(n+1)$ such that $I\in \mathbf{G}^{[s]}_{\mathcal I,gg_1}\cap\dots\cap \mathbf{G}^{[s]}_{\mathcal I,gg_m}$.

By Corollary \ref{ricpiccoloG}, we can find ${\mathcal I}$ and $\overline g$ such that $I\in {\mathbf{G}}^{[s]}_{{\mathcal{I}},\overline{g}}$.
 The orbit of $I$ under the action of $\PGL(n+1)$ 
 is almost completely contained in
 ${\mathbf{G}}^{[s]}_{{\mathcal{I}},\overline{g}}$; let $U$ be an open subset of $\PGL(n+1)$ such that
 $(g')^{-1}\centerdot I\in {\mathbf{G}}^{[s]}_{{\mathcal{I}},\overline{g}}$  for every $g'\in U$. Hence, for every $g'\in U$, 
 $I\in  {\mathbf{G}}^{[s]}_{{\mathcal{I}},g'\overline{g}}$.

Thus, taking a general element  $g\in \PGL(n+1)$,  the elements    $g g_1\overline{g}^{-1}, \dots, g g_m\overline{g}^{-1} $ are all contained in $ U$, so that 
$ \mathbf{G}^{[s]}_{\mathcal I,gg_1}\cap\dots\cap \mathbf{G}^{[s]}_{\mathcal I,gg_m}$ is the required  open   neighborhood of $I$.
 \end{proof2}

\section*{Acknowledgments}
The authors thank the referee for several very important suggestions that improved the quality of the paper.

\bibliographystyle{plainnat}

\begin{thebibliography}{28}
\providecommand{\natexlab}[1]{#1}
\providecommand{\url}[1]{\texttt{#1}}
\expandafter\ifx\csname urlstyle\endcsname\relax
  \providecommand{\doi}[1]{doi: #1}\else
  \providecommand{\doi}{doi: \begingroup \urlstyle{rm}\Url}\fi

\bibitem[Alonso et~al.(2009)Alonso, Brachat, and Mourrain]{ABM}
Mariemi Alonso, Jerome Brachat, and Bernard Mourrain.
\newblock The {H}ilbert scheme of points and its link with border basis.
\newblock Available at http://arxiv.org/abs/0911.3503, 2009.

\bibitem[Arbarello et~al.(2011)Arbarello, Cornalba, and Griffiths]{ACG}
Enrico Arbarello, Maurizio Cornalba, and Pillip~A. Griffiths.
\newblock \emph{Geometry of algebraic curves. {V}olume {II}}, volume 268 of
  \emph{Grundlehren der Mathematischen Wissenschaften [Fundamental Principles
  of Mathematical Sciences]}.
\newblock Springer, Heidelberg, 2011.
\newblock With a contribution by Joseph Daniel Harris.

\bibitem[Bayer(1982)]{B}
David Bayer.
\newblock \emph{The division algorithm and the {H}ilbert schemes}.
\newblock PhD thesis, Harvard University, 1982.
\newblock Ph.D. Thesis.

\bibitem[Bayer and Stillman(1987{\natexlab{a}})]{BS}
David Bayer and Michael Stillman.
\newblock A criterion for detecting {$m$}-regularity.
\newblock \emph{Invent. Math.}, 87\penalty0 (1):\penalty0 1--11,
  1987{\natexlab{a}}.

\bibitem[Bayer and Stillman(1987{\natexlab{b}})]{BS87}
David Bayer and Michael Stillman.
\newblock A theorem on refining division orders by the reverse lexicographic
  order.
\newblock \emph{Duke Math. J.}, 55\penalty0 (2):\penalty0 321--328,
  1987{\natexlab{b}}.

\bibitem[Bertone et~al.(2013{\natexlab{a}})Bertone, Cioffi, Lella, and
  Roggero]{BCLR}
Cristina Bertone, Francesca Cioffi, Paolo Lella, and Margherita Roggero.
\newblock Upgraded methods for the effective computation of marked schemes on a
  strongly stable ideal.
\newblock \emph{J. Symbolic Comput.}, 50:\penalty0 263--290,
  2013{\natexlab{a}}.

\bibitem[Bertone et~al.(2013{\natexlab{b}})Bertone, Lella, and Roggero]{BLR}
Cristina Bertone, Paolo Lella, and Margherita Roggero.
\newblock A {B}orel open cover of the {H}ilbert scheme.
\newblock \emph{J. Symbolic Comput.}, 53:\penalty0 119--135,
  2013{\natexlab{b}}.

\bibitem[Brachat et~al.(2013)Brachat, Lella, Mourrain, and Roggero]{BLMR}
Jerome Brachat, Paolo Lella, Bernard Mourrain, and Margherita Roggero.
\newblock {Extensors and the Hilbert scheme}.
\newblock Available at http://arxiv.org/abs/1104v2.2007, 2013.

\bibitem[Ciliberto and Sernesi(1989)]{CS}
Ciro Ciliberto and Edoardo Sernesi.
\newblock Families of varieties and the {H}ilbert scheme.
\newblock In \emph{Lectures on {R}iemann surfaces ({T}rieste, 1987)}, pages
  428--499. World Sci. Publ., Teaneck, NJ, 1989.

\bibitem[Cioffi and Roggero(2011)]{CR}
Francesca Cioffi and Margherita Roggero.
\newblock Flat families by strongly stable ideals and a generalization of
  {G}r\"obner\ bases.
\newblock \emph{J. Symbolic Comput.}, 46\penalty0 (9):\penalty0 1070--1084,
  2011.

\bibitem[Cioffi et~al.(2011)Cioffi, Lella, Marinari, and Roggero]{CLMR}
Francesca Cioffi, Paolo Lella, Maria~Grazia Marinari, and Margherita Roggero.
\newblock Segments and {H}ilbert schemes of points.
\newblock \emph{Discrete Mathematics}, 311\penalty0 (20):\penalty0 2238 --
  2252, 2011.

\bibitem[Eisenbud(1995)]{e2}
David Eisenbud.
\newblock \emph{Commutative algebra}, volume 150 of \emph{Graduate Texts in
  Mathematics}.
\newblock Springer-Verlag, New York, 1995.
\newblock With a view toward algebraic geometry.

\bibitem[Eisenbud(2005)]{e}
David Eisenbud.
\newblock \emph{The geometry of syzygies}, volume 229 of \emph{Graduate Texts
  in Mathematics}.
\newblock Springer-Verlag, New York, 2005.
\newblock A second course in commutative algebra and algebraic geometry.

\bibitem[Galligo(1974)]{Galligo}
Andr{\'e} Galligo.
\newblock \`{A} propos du th\'eor\`eme de pr\'eparation de {W}eierstrass.
\newblock In \emph{Fonctions de plusieurs variables complexes ({S}\'em.
  {F}ran\c cois {N}orguet, octobre 1970--d\'ecembre 1973; \`a la m\'emoire
  d'{A}ndr\'e {M}artineau)}, pages 543--579. Lecture Notes in Math., Vol. 409.
  Springer, Berlin, 1974.
\newblock Th{\`e}se de 3{\`e}me cycle soutenue le 16 mai 1973 {\`a} l'Institut
  de Math{\'e}matique et Sciences Physiques de l'Universit{\'e} de Nice.

\bibitem[Gotzmann(1978)]{gotz}
Gerd Gotzmann.
\newblock {Eine Bedingung f\"ur die Flachheit und das Hilbertpolynom eines
  graduierten Ringes.}
\newblock \emph{Math. Z.}, 158:\penalty0 61--70, 1978.

\bibitem[Green(2010)]{Green}
Mark~L. Green.
\newblock Generic initial ideals.
\newblock In \emph{Six lectures on commutative algebra}, Mod. Birkh\"auser
  Class., pages 119--186. Birkh\"auser Verlag, Basel, 2010.

\bibitem[Grothendieck(1995)]{Gro}
Alexander Grothendieck.
\newblock Techniques de construction et th{\'e}or{\`e}mes d'existence en
  g{\'e}om{\'e}trie alg{\'e}brique. {IV}. {L}es sch{\'e}mas de {H}ilbert.
\newblock In \emph{S{\'e}minaire {B}ourbaki, {V}ol.\ 6}, pages Exp.\ No.\ 221,
  249--276. Soc. Math. France, Paris, 1995.

\bibitem[Gruson et~al.(1983)Gruson, Lazarsfeld, and Peskine]{glp}
Laurent Gruson, Robert Lazarsfeld, and Christian Peskine.
\newblock On a theorem of {C}astelnuovo, and the equations defining space
  curves.
\newblock \emph{Invent. Math.}, 72\penalty0 (3):\penalty0 491--506, 1983.

\bibitem[Haiman and Sturmfels(2004)]{HaimSturm}
Mark Haiman and Bernd Sturmfels.
\newblock Multigraded {H}ilbert schemes.
\newblock \emph{J. Algebraic Geom.}, 13\penalty0 (4):\penalty0 725--769, 2004.

\bibitem[Hartshorne(1966)]{h2}
Robin Hartshorne.
\newblock Connectedness of the {H}ilbert scheme.
\newblock \emph{Inst. Hautes \'Etudes Sci. Publ. Math.}, \penalty0
  (29):\penalty0 5--48, 1966.

\bibitem[Hartshorne(1977)]{AG}
Robin Hartshorne.
\newblock \emph{Algebraic geometry}.
\newblock Springer-Verlag, New York, 1977.
\newblock Graduate Texts in Mathematics, No. 52.

\bibitem[Hirschowitz and Simpson(1996)]{hirscho}
Andr{\'e} Hirschowitz and Carlos Simpson.
\newblock La r\'esolution minimale de l'id\'eal d'un arrangement g\'en\'eral
  d'un grand nombre de points dans {${\bf P}^n$}.
\newblock \emph{Invent. Math.}, 126\penalty0 (3):\penalty0 467--503, 1996.

\bibitem[Iarrobino and Kleiman(1999)]{ik}
Anthony Iarrobino and Steve Kleiman.
\newblock \emph{The Gotzmann Theorems and the Hilbert Scheme ({A}ppendix {C} of
  {P}ower sums, {G}orenstein algebras, and determinantal loci)}, volume 1721 of
  \emph{Lecture Notes in Mathematics}.
\newblock Springer-Verlag, Berlin, 1999.

\bibitem[Lella and Roggero(2013)]{LRFunt}
Paolo Lella and Margherita Roggero.
\newblock On the functoriality of marked families.
\newblock Available at http://arXiv:1307.7657, 2013.

\bibitem[Mall(2000)]{mall}
Daniel Mall.
\newblock Connectedness of {H}ilbert function strata and other connectedness
  results.
\newblock \emph{J. Pure Appl. Algebra}, 150\penalty0 (2):\penalty0 175--205,
  2000.

\bibitem[Mumford(1966)]{m}
David Mumford.
\newblock \emph{Lectures on curves on an algebraic surface}.
\newblock With a section by G. M. Bergman. Annals of Mathematics Studies, No.
  59. Princeton University Press, Princeton, N.J., 1966.

\bibitem[Nitsure(2005)]{nit}
Nitin Nitsure.
\newblock Construction of {H}ilbert and {Q}uot schemes.
\newblock In \emph{Fundamental algebraic geometry}, volume 123 of \emph{Math.
  Surveys Monogr.}, pages 105--137. Amer. Math. Soc., Providence, RI, 2005.

\bibitem[Sernesi(2006)]{sernesi}
Edoardo Sernesi.
\newblock \emph{Deformations of algebraic schemes}, volume 334 of
  \emph{Grundlehren der Mathematischen Wissenschaften [Fundamental Principles
  of Mathematical Sciences]}.
\newblock Springer-Verlag, Berlin, 2006.

\end{thebibliography}

\end{document}